\definecolor{darkgreen}{rgb}{0,0.45,0}
\theoremstyle{plain}
\newtheorem{thm}{Theorem}
\newtheorem*{thm*}{Theorem}
\newtheorem{prop}[thm]{Proposition}
\newtheorem{lem}[thm]{Lemma}
\newtheorem*{prop*}{Proposition}
\newtheorem{question}[thm]{Question}
\theoremstyle{definition}
\newtheorem{defn}[thm]{Definition}
\newtheorem{expl}[thm]{Example}
\theoremstyle{remark}
\title[Extensions of representation stable categories]
{Extensions of representation stable categories}
\author[Moeller]{Joe Moeller}
\DeclareFontFamily{U}{min}{}
\DeclareFontShape{U}{min}{m}{n}{<-> udmj30}{}
\newcommand{\define}[1]{{\bf \boldmath{#1}}\index{#1}}
\newcommand{\maps}{\colon}
\newcommand{\To}{\Rightarrow}
\newcommand{\inv}{^{-1}}
\newcommand{\inta}{\textstyle\int\hspace{-.07in}}
\newcommand{\Aut}{\mathsf{Aut}}
\newcommand{\CC}{\mathbb{C}}
\newcommand{\ZZ}{\mathbb Z}
\newcommand{\Hom}{\mathrm{Hom}}
\newcommand{\ob}{\mathrm{ob}}
\newcommand{\op}{^\mathrm{op}}
\newcommand{\category}[1]{\mathcal{#1}}
\newcommand{\A}{\category A}
\newcommand{\B}{\category B}
\newcommand{\C}{\category C}
\newcommand{\D}{\category D}
\newcommand{\X}{\category X}
\newcommand{\Y}{\category Y}
\newcommand{\pseudofunctor}[1]{\mathcal{#1}}
\newcommand{\M}{\pseudofunctor M}
\newcommand{\N}{\pseudofunctor N}
\newcommand{\namedcat}[1]{\mathsf{#1}}
\newcommand{\namedbicat}[1]{\mathsf{#1}}
\newcommand{\Cat}{\namedcat{Cat}}
\newcommand{\Cart}{\namedbicat{Cart}}
\newcommand{\Fib}{\namedcat{Fib}}
\newcommand{\FB}{\namedcat{FB}}
\newcommand{\FI}{\namedcat{FI}}
\newcommand{\Grp}{\namedcat{Grp}}
\newcommand{\FinGrp}{\namedcat{FinGrp}}
\newcommand{\ICat}{\namedcat{ICat}}
\newcommand{\Rep}{\namedcat{Rep}}
\newcommand{\Ring}{\namedcat{Ring}}
\newcommand{\Mod}{\namedcat{Mod}}
\mathchardef\mhyphen="2D
\definecolor {processblue}{cmyk}{0.9,0.5,0,0}
\tikzstyle{simple}=[-,line width=2.000]
\tikzstyle{arrow}=[-,postaction={decorate},decoration={markings,mark=at position .5 with {\arrow{>}}},line width=1.100]
\tikzstyle{none}=[inner sep=-1pt]
\tikzstyle{empty}=[circle,fill=none, draw=none]
\tikzstyle{inputdot}=[circle,fill=black,draw=black, scale=.5]
\tikzstyle{dot}=[circle,fill=black,draw=black]
\tikzstyle{bounding}=[circle,dashed, fill=none,draw=black, scale=9.00]
\tikzstyle{simple}=[-,draw=black,line width=1.000]
\tikzstyle{inarrow}=[-,draw=black,postaction={decorate},decoration={markings,mark=at position .5 with {\arrow{>}}},line width=1.000]
\tikzstyle{tick}=[-,draw=black,postaction={decorate},decoration={markings,mark=at position .5 with {\draw (0,-0.1) -- (0,0.1);}},line width=1.000]
\tikzstyle{inputarrow}=[->,draw=black, shorten >=.05cm]
\tikzset{main node/.style={circle,fill=blue!20,draw,minimum size=1cm,inner sep=0pt},}
\tikzstyle{construct}=[fill=white, draw=black, shape=circle]
\tikzstyle{universal}=[fill=black, draw=black, shape=circle]
\numberwithin{thm}{section}
\begin{document}

\begin{abstract}
    A category of FI type is one which is sufficiently similar to finite sets and injections so as to admit nice representation stability results. Several common examples admit a Grothendieck fibration to finite sets and injections. We begin by carefully reviewing the theory of fibrations of categories with motivating examples relevant to algebra and representation theory. We classify which functors between FI type categories are fibrations, and thus obtain sufficient conditions for an FI type category to be the result of a Grothendieck construction.
\end{abstract}

\maketitle
\setcounter{tocdepth}{1} 
\tableofcontents

\section{Introduction}

One strategy when studying representations of a family of groups is to find a category where the automorphism groups are the groups of interest. A functor from this category into some some category of modules amounts to a family of representations of the groups equipped with some extra data. The seminal instance is that of the symmetric groups and the category of finite sets and injective functions, denoted $\FI$. Church--Ellenberg--Farb--Nagpal showed that over a noetherian ring $k$, any submodule of a finitely generated $\FI$-module is finitely generated \cite{FImodNoetherian}. The implication of this finite generation property for the family of $S_n$ representations underlying an $\FI$-module is that beyond a certain point, all the data of higher-indexed representations are completely determined by the lower ones. \emph{Quod est superius est sicut quod inferius.}

For each finite group $G$, there is a category $\FI_G$ which enjoys many of the same properties as $\FI$. Just as in $\FI$, the objects of $\FI_G$ are finite sets, but the morphisms are injections where each element of the source is decorated with an element of $G$. 
\[
\begin{tikzpicture}
	\begin{pgfonlayer}{nodelayer}
		\node [style=none]  (1) at (0, 0) {$\bullet$};
		\node [style=none]  (2) at (0, -0.5) {$\bullet$};
		\node [style=none]  (3) at (0, -1) {$\bullet$};
		\node [style=none]  (4) at (2, 0) {$\bullet$};
		\node [style=none]  (5) at (2, -0.5) {$\bullet$};
		\node [style=none]  (6) at (2, -1) {$\bullet$};
		\node [style=none]  (7) at (2, -1.5) {$\bullet$};
		\node [style=none] () at (3, -1) {$\circ$};
		\node [style=none]  () at (4, -2) {};
		\node [style=none]  () at (-0.3, 0) {\color{blue} $g_1$};
		\node [style=none]  () at (-0.3, -0.5) {\color{blue} $g_2$};
		\node [style=none]  () at (-0.3, -1) {\color{blue} $g_3$};
	\end{pgfonlayer}
	\begin{pgfonlayer}{edgelayer}
		\draw [style=simple] (1) to (5);
		\draw [style=simple] (2) to (4);
		\draw [style=simple] (3) to (7);
	\end{pgfonlayer}
\end{tikzpicture}
\begin{tikzpicture}
	\begin{pgfonlayer}{nodelayer}
		\node [style=none]  (1) at (0, 0) {$\bullet$};
		\node [style=none]  (2) at (0, -0.5) {$\bullet$};
		\node [style=none]  (3) at (0, -1) {$\bullet$};
		\node [style=none]  (4) at (0, -1.5) {$\bullet$};
		\node [style=none]  (5) at (2, 0) {$\bullet$};
		\node [style=none]  (6) at (2, -0.5) {$\bullet$};
		\node [style=none]  (7) at (2, -1) {$\bullet$};
		\node [style=none]  (8) at (2, -1.5) {$\bullet$};
		\node [style=none]  (9) at (2, -2) {$\bullet$};
		\node [style=none] () at (3, -1) {=};
		\node [style=none]  () at (4, -2) {};
		\node [style=none]  () at (-0.3, 0) {\color{blue} $h_1$};
		\node [style=none]  () at (-0.3, -0.5) {\color{blue} $h_2$};
		\node [style=none]  () at (-0.3, -1) {\color{blue} $h_3$};
		\node [style=none]  () at (-0.3, -1.5) {\color{blue} $h_4$};
	\end{pgfonlayer}
	\begin{pgfonlayer}{edgelayer}
		\draw [style=simple] (1) to (9);
		\draw [style=simple] (2) to (5);
		\draw [style=simple] (3) to (6);
		\draw [style=simple] (4) to (7);
	\end{pgfonlayer}
\end{tikzpicture}
\begin{tikzpicture}
	\begin{pgfonlayer}{nodelayer}
		\node [style=none]  (1) at (0, 0) {$\bullet$};
		\node [style=none]  (2) at (0, -0.5) {$\bullet$};
		\node [style=none]  (3) at (0, -1) {$\bullet$};
		\node [style=none]  (5) at (2, 0) {$\bullet$};
		\node [style=none]  (6) at (2, -0.5) {$\bullet$};
		\node [style=none]  (7) at (2, -1) {$\bullet$};
		\node [style=none]  (8) at (2, -1.5) {$\bullet$};
		\node [style=none]  (9) at (2, -2) {$\bullet$};
		\node [style=none]  () at (-0.5, 0) {\color{blue} $g_1h_2$};
		\node [style=none]  () at (-0.5, -0.5) {\color{blue} $g_2h_1$};
		\node [style=none]  () at (-0.5, -1) {\color{blue} $g_3h_4$};
	\end{pgfonlayer}
	\begin{pgfonlayer}{edgelayer}
		\draw [style=simple] (1) to (5);
		\draw [style=simple] (2) to (9);
		\draw [style=simple] (3) to (7);
	\end{pgfonlayer}
\end{tikzpicture}
\]
Composition is given by pulling back the group elements along the injection, and multiplying where necessary, as demonstrated below with $g_i, h_j \in G$. The automorphism groups here are precisely the wreath products $S_n \ltimes G^n$. Sam and Snowden defined this category and proved that its category of modules over a noetherian ring is noetherian \cite{RepGmaps}.

It can be seen without immense effort, once one is brought to ask the question, that the evident projection functor $\FI_G \to \FI$ is a \emph{fibration}. A \define{fibration of categories} is a functor $\A \to \X$ with a lifting property which allows one to formally pull back objects and arrows of $\A$ along arrows in $\X$. There is an equivalence between such fibrations and weak functors $\X\op \to \Cat$ valued in categories, functors, and natural transformations. The details of this classical equivalence can be found in \cite{2DCats}. Such a weak functor is known as an \define{$\X$-indexed category}. 
The reverse process, constructing a category with a fibration to $\X$ from an $\X$-indexed category, is called the \define{Grothendieck construction}.
The following vague, but natural, question arises. 
\begin{question}
\label{quest}
    If $\X$ is a category which is known to have nice representation theoretic properties, under which conditions does the Grothendieck construction of a weak functor $\X\op \to \Cat$ inherit those desired properties from $\X$?
\end{question}

The vagueness in  this question lies precisely in what is meant by ``nice representation theoretic properties''. Progress has been made to extract the essential categorical features of $\FI$ that allow for its apparent nice representation theoretic properties. Gan and Li \cite{EICat} have shown that under some simple combinatorial conditions, over a field of characteristic 0, finitely generated modules of an EI category (where every endomorphism is invertible) with objects essentially parameterized by $\N$ are noetherian. Gadish imposed some further categorical conditions, but was able to lift the demand for parameterization by $\N$, the motivating case being modules over $\FI^n$. In so doing, a noetherian result was obtained for a wider class of categories, designated \define{$\FI$-type categories} \cite{FItypeCats}. This paper seeks to provide an answer to \cref{quest} by providing necessary and sufficient conditions on an $\X$-indexed category such that the Grothendieck construction produces an $\FI$-type category, provided that $\X$ itself is of $\FI$ type.

Presently, we prove the following theorem in order to provide an answer to one instance of Question \ref{quest}. 
\begin{thm*}[\cref{thm:main}]
    Let $\X$ be an $\FI$-type category, and let $\M \maps \X\op \to \Cat$ be an indexed category. Then the Grothendieck construction applied to $\M$ produces an $\FI$-type category and the related fibration preserves pullbacks and weak pushouts if 
    \begin{enumerate}
        \item the fibers are $\FI$ type categories
        \item for every endomorphism $f \maps x \to x$ and object $a$ in the fiber over $x$, every map $a \to \M f(a)$ in $\M x$ is invertible
        \item the inclusions $\M x \hookrightarrow \inta \M$ preserve pullbacks
        \item $\M$ is weakly reversible
    \end{enumerate} 
    for all objects $x,y$ and morphisms $f \maps x \to y$ in $\X$. 
\end{thm*}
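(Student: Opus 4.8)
The plan is to unwind the definition of an $\FI$-type category into its constituent axioms and to check each one for the total category $\inta\M$, taking the ``horizontal'' data from the corresponding axiom on $\X$, the ``vertical'' data from the same axiom on the fibers $\M x$, and using hypotheses (2)--(4) to fuse the two. First I would fix notation for the Grothendieck construction: an object of $\inta\M$ is a pair $(x,a)$ with $x \in \X$ and $a \in \M x$; a morphism $(x,a) \to (y,b)$ is a pair $(f,\varphi)$ with $f \maps x \to y$ in $\X$ and $\varphi \maps a \to \M f(b)$ in $\M x$; and composition is defined through the structural isomorphisms of the pseudofunctor $\M$. The associated fibration $\pi \maps \inta\M \to \X$ sends $(x,a) \mapsto x$ and $(f,\varphi) \mapsto f$; its cartesian morphisms are precisely those of the form $(f,\mathrm{id})$, and every morphism factors canonically as a vertical morphism $(\mathrm{id},\varphi)$ followed by a cartesian one.

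I would begin with the structural axioms. Every morphism $(f,\varphi)$ of $\inta\M$ is a monomorphism, since a pair is monic as soon as $f$ is monic in $\X$ and $\varphi$ is monic in $\M x$, and both hold: $\X$ is $\FI$-type, and by hypothesis (1) every fiber $\M x$ is an $\FI$-type category, so \emph{every} morphism of $\M x$ is in particular monic. By the same token $\inta\M$ is EI: an endomorphism of $(x,a)$ is a pair $(f,\varphi)$ with $f \maps x \to x$ invertible (as $\X$ is EI) and $\varphi \maps a \to \M f(a)$ invertible by hypothesis (2), and such a pair is invertible in $\inta\M$ via the coherence isomorphisms of $\M$. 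Finally, if $\emptyset_\X$ is (weakly) initial in $\X$ and $\emptyset$ is (weakly) initial in the $\FI$-type category $\M\emptyset_\X$, then $(\emptyset_\X,\emptyset)$ is (weakly) initial in $\inta\M$, because a morphism from it to $(x,a)$ unpacks into a choice of $f \maps \emptyset_\X \to x$ together with a morphism $\emptyset \to \M f(a)$ in $\M\emptyset_\X$.

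For pullbacks, and their preservation by $\pi$, I would use the recipe ``pull back in the base, then in the fiber''. Given a cospan with apex $(z,c)$ and legs $(f,\varphi) \maps (x,a) \to (z,c)$ and $(g,\psi) \maps (y,b) \to (z,c)$, form the pullback $w = x \times_z y$ in $\X$, with projections $p_1 \maps w \to x$ and $p_2 \maps w \to y$ and $r := f p_1 = g p_2$. Pulling the cartesian parts of the legs back along $p_1$ and $p_2$ is automatic in any fibration and carries all of the vertical data into the single fiber $\M w$, producing a cospan $\M p_1(a) \to \M r(c) \leftarrow \M p_2(b)$ there; the only coherence cells that intervene are the structural isomorphisms of $\M$ identifying $\M p_1 \M f$ and $\M p_2 \M g$ with $\M r$. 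Now take the pullback of that cospan inside the $\FI$-type category $\M w$, which exists by hypothesis (1), and apply hypothesis (3): its image under $\M w \hookrightarrow \inta\M$ is again a pullback. A short chase, decomposing an arbitrary competing cone into its cartesian and vertical parts, identifies this object with the pullback of the original cospan; as it lies over $w$, the functor $\pi$ preserves pullbacks.

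Weak pushouts are the step where hypothesis (4), weak reversibility of $\M$, is indispensable. The naive analogue of the recipe above fails at once: given a span $(x,a) \leftarrow (w,d) \to (y,b)$ one forms the weak pushout $z$ of $x \leftarrow w \to y$ in $\X$, but $\M$ is \emph{contravariant}, so there is no reindexing functor carrying the fibers $\M x$, $\M y$, $\M w$ into $\M z$, hence no fiber in which to take a weak colimit. Weak reversibility is exactly the hypothesis that supplies the missing ``pushforward'' comparison data, letting the span be transported into $\M z$ up to the required weak universal property; a weak pushout in the $\FI$-type category $\M z$ then completes the object, and $\pi$ preserves it by construction. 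The remaining finiteness axioms of an $\FI$-type category transfer routinely: a degree function on $\inta\M$ is assembled additively from the degree of $x$ in $\X$ and of $a$ in $\M x$, so there are finitely many objects of each degree; generating morphisms of $\inta\M$ split into a lifted generating morphism of $\X$ followed by a vertical generating morphism of a fiber; and hypothesis (2) keeps each automorphism group $\Aut(x,a)$ finite by forbidding non-invertible vertical endomorphisms. The serious obstacle is thus isolating precisely what ``weakly reversible'' must demand so that it repairs the basewise-then-fiberwise recipe for weak pushouts while remaining valid in the motivating examples such as $\FI_G \to \FI$, since the contravariance of $\M$ is exactly the obstruction to any naive colimit computation; a far milder, clerical obstacle is threading the coherence $2$-cells of $\M$ consistently through the monomorphism and pullback arguments.
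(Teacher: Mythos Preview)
Your overall architecture---check each axiom of \cref{def:FItype} by combining the corresponding axiom on $\X$ with that on the fibers---matches the paper, and your treatments of monomorphisms, the EI property, and pullbacks are essentially the same as the paper's. The weak-pushout sketch is also in the right spirit.

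However, there is a genuine gap: you are working from the wrong list of axioms. Gadish's definition of $\FI$-type (\cref{def:FItype} in this paper) does \emph{not} involve initial objects, degree functions, or generating morphisms; those belong to other axiomatizations (e.g.\ Gan--Li's combinatorial conditions). What the definition \emph{does} require, and what your proposal omits entirely, are: (i) local finiteness of hom-sets, (ii) the ``increasing'' condition that only finitely many isomorphism classes admit a map into a given object, and most seriously (iii) the transitive hom-action, i.e.\ that $\Aut(y,b)$ acts transitively on $\Hom((x,a),(y,b))$. Items (i) and (ii) are routine once stated correctly (see the paper's \cref{lem:locfin} and \cref{lem:increasing}), but (iii) is not: given two morphisms $(f_1,k_1),(f_2,k_2)\maps(x,a)\to(y,b)$, transitivity of $\X$ supplies $g\in\Aut(y)$ with $gf_2=f_1$, but one still needs an $\ell\maps b\to\M g(b)$ making $k_1$ and $k_2$ agree after composition---a compatibility between the fiber maps that does not follow from fiberwise transitivity alone. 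The paper isolates exactly this condition in \cref{lem:transitive}, and your proposal has no counterpart to it. Without addressing transitivity you have not verified one of the seven defining properties, so the argument is incomplete.
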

We do not expect the terms `weak pushout' or `weakly reversible' to be familiar to the reader, but are introduced within the paper where appropriate.

\cref{sec:Grothendieck} contains an overview of the theory of indexed categories, fibrations, and their equivalence via the Grothendieck construction. Examples relevant to representation theory motivate the definitions and constructions. 
\cref{sec:Auto} unpacks in detail what happens to automorphism groups under the construction, and the relation to nonabelian cohomology. 
In \cref{sec:GCFI}, we review Gadish's definition of $\FI$-type category and the representation stability results for such categories. We then state \cref{thm:main}, which gives sufficient conditions on an indexed category $\M \maps \X\op \to \Cat$ with an $\FI$-type base category, under which the Grothendieck construction produces an $\FI$-type total category, along with several examples. 
\cref{sec:proofs} is dedicated to the proof of \cref{thm:main}. The definition of FI type category consists of several categorical properties, and the relationship of each one with the notion of Grothendieck fibrations is treated separately. We hope this aids any future researcher in identifying which conditions they need for their own scenario.

\subsection*{Acknowledgements}
A tremendous debt of gratitude is owed to Derek Lowenberg, who introduced me to representation stability and helped me carve out the ideas presented here. Reid Barton and Mike Shulman pointed out instructive examples which helped to identify necessary and sufficient conditions for the Grothendieck construction to produce an EI category. I would also like to thank John Baez, Jonathan Beardsley, Spencer Breiner, Scott Carter, Nir Gadish, Wee Liang Gan, David Jaz Myers, Todd Trimble, and Christina Vasilakopoulou for helpful discussions.

\section{Grothendieck fibrations}
\label{sec:Grothendieck}

A \emph{Grothendieck fibration} is a functor $P \maps \A \to \X$ which essentially allows you to divide the category $\A$ into subcategories $\A_x$ called \emph{fibers} over the objects $x$ of $\X$, with \emph{pullback} functors between them corresponding to the maps in $\X$. Such structures naturally appear in fields as widely varying as algebraic geometry \cite{Vistoli}, logic and theoretical computer science \cite{Jacobs}, and homotopy theory and higher category theory \cite{FramedBicats}. They have been studied extensively by category theorists, especially in the context of topos theory \cite{Grayfibredandcofibred, FibredAdjunctions, Handbook2, Elephant1, alaBenabou, 2DCats}. 

The \emph{Grothendieck construction} produces such a fibration from a family of categories and functors indexed by the objects of the base category $\X$. In fact, any fibration is equivalent to one produced by this construction. In this way, there is an equivalence between fibrations and indexed categories. This offers two different perspectives of the same data, often allowing different tools and results to be used. 

In this section, we recall the basic theory of indexed categories, Grothendieck fibrations, their equivalence via the Grothendieck construction, and examples found in algebra and representation theory. The full generality of this theory demands the use of 2-categorical concepts and language. There are however 1-categorical versions of everything, which we do our best to include whenever possible. We encourage the unfamiliar reader to look to these first as they read through. 
Unabridged detail can be found in Johnson and Yau's book \cite{2DCats} for all 2-categorical concepts we use here, including 2-categories, pseudofunctors, pseudonatural transformations, and modifications. We will review an abbreviated form of this information for the reader's convenience.

\subsection{Indexed Categories}

We will use $\Cat$ to refer either to the category of categories and functors, or the 2-category of the same and natural transformations. We do our best to make the distinction clear either explicitly or by context. 

Let $\X$ be a category. A \define{strict $\X$-indexed category} is a functor $\M \maps \X\op \to \Cat$. An \define{$\X$-indexed functor} is a natural transformation $\alpha \maps \M \To \N$. Let $\ICat_s(\X)$ denote the functor category $[\X\op, \Cat]$.

\begin{expl}[Group representations]
\label{ex:Grep1}
    Let $\FinGrp$ denote the category of finite groups and group homomorphisms. Define the functor $\Rep \maps \FinGrp\op \to \Cat$ as follows. To a finite group $G$ assign the category of representations of $G$, $\Rep(G)$. To a group homomorphism $\phi \maps G \to H$ assign the functor $\phi^* \maps \Rep(H) \to \Rep(G)$ given by pulling back along $\phi$.
\end{expl}

\begin{expl}[Ring modules]
\label{ex:mod1}
    Let $\Ring$ denote the category of rings and ring homomorphisms. Define the functor $\Mod \maps \Ring\op \to \Cat$ as follows. To a ring $R$ it assigns the category of $R$-modules, $R\Mod$. To a ring homomorphism $\phi \maps R \to S$ assign the functor $\phi^\ast \maps S\Mod \to R\Mod$ given by pulling back along $\phi$, restriction of scalars.
\end{expl}

\begin{expl}[Group action as a functor]
\label{ex:action1}
    Groups can be thought of as one-object groupoids, and group homomorphisms as functors between these. Let $G$ and $H$ be groups and $A \maps G \to \Aut(H)$ a right action of $G$ on $H$. Denote the action of an element $g \in G$ on an element $h \in H$ by $h.g$. This can be thought of as a functor $A \maps G\op \to \Grp$ which sends the unique object of $G$ to $H$ itself, and every element of $G$ to the automorphisms specified by the action. By composing with the inclusion functor, we see this as an indexed category $G\op \xrightarrow A \Grp \hookrightarrow \Cat$. 
\end{expl}

Let $\C$ and $\D$ be 2-categories. A \define{pseudofunctor} $F \maps \C \to \D$ consists of an object function $\ob\C \to \ob\C$ just as functors do, but now the assignment on morphisms is a functor $\C(c, c') \to \D(Fc, Fc')$ (thus including a function on the 2-morphisms which is associative and unital). The composition and unit preservation laws at the level of 1-morphisms are now weakened from equations to specified invertible 2-morphisms $\mu_{f,g} \maps F(f) \circ F(g) \To F(f \circ g)$ natural in $f$ and $g$, and $\eta_c \maps id_{Fc} \To F(id_c)$ natural in $c$. We call these the \define{compositor} and \define{unitor} maps respectively. Further, these maps must themselves satisfy some equations, which can be found in Johnson and Yau's book \cite{2DCats}. 

There is a suitable generalization of natural transformations, called \define{pseudonatural transformations}. The naturality condition, instead of being the requirement for certain squares to commute, is extra structure in the form of invertible 2-morphisms, which again must satisfy some equations. Unlike with natural transformations, there is also a fitting notion of map \emph{between} pseudonatural transformations, called a \define{modification}. Just as a natural transformation assigns a morphism in the target category to each object of the source category, a modification assigns a 2-morphism in the target category to each object of the source category. Again, full detail can be found \emph{op cit}.

Let $\X$ be a category, thought of as a locally discrete 2-category. An \define{$\X$-indexed category} is a pseudofunctor $\M \maps \X\op \to \Cat$. An \define{$\X$-indexed functor} is a pseudonatural transformation $\alpha \maps \M \To \N$. A \define{$\X$-indexed natural transformation} is a modification. Let $\ICat(\X)$ denote the 2-category $[\X\op, \Cat]_{ps}$ of pseudofunctors, pseudonatural transformations, and modifications.

\begin{expl}[Slice and pullback]
\label{ex:slice}
    Let $\X$ be a category, and $x$ an object of $\X$. The \define{slice category at $x$}, denoted $\X/x$, has arrows $f \maps y \to x$ as objects and commutative triangles 
    \[
    \begin{tikzcd}
        y
        \arrow[rr, "h"]
        \arrow[dr, swap, "f"]
        &&
        z
        \arrow[dl, "g"]
        \\&
        x
    \end{tikzcd}
    \]
    as morphisms. Assume $\X$ has pullbacks. Given a map $j \maps x \to y$ in $\X$, then we define $j^* \maps \X/y \to \X/x$ on an object $f \maps z \to y$ by taking the following pullback.
    \[
    \begin{tikzcd}
        x \times_y z
        \arrow[r]
        \arrow[dr, phantom, "\lrcorner", pos = 0.1]
        \arrow[d, swap, "j^*f"]
        &
        z
        \arrow[d, "f"]
        \\
        x
        \arrow[r, swap, "j"]
        &
        y
    \end{tikzcd}
    \]
    For a morphism $h \maps f \to g$, $j^*(h)$ is provided by the universal property of pullbacks.
    \[
    \begin{tikzcd}
        &
        x \times_y w
        \arrow[ddrr, phantom, "\lrcorner", pos = 0.1]
        \arrow[rr]
        \arrow[dd, "j^*g", pos = 0.75]
        &&
        w
        \arrow[dd, "g"]
        \\
        x\times_y z
        \arrow[drrr, phantom, "\lrcorner", pos = 0.1]
        \arrow[ur, dashed, "j^*h"]
        \arrow[rr, crossing over]
        \arrow[dr, swap, "j^*f"]
        &&
        z
        \arrow[ur, "h"]
        \arrow[dr, swap, "f"]
        \\&
        x
        \arrow[rr, swap, "j"]
        &&
        y
    \end{tikzcd}
    \]
    Thus we define a pseudofunctor $\X/- \maps \X\op \to \Cat$ with the assignments on objects and morphisms as above, and the compositor and unitor derived from universal property of pullbacks. Despite these isomorphisms, this cannot in general be made into a strict functor \cite[Example 1.10.3.iv]{Jacobs, 279985}. 
\end{expl}

\subsection{The Grothendieck Construction}

Before fibrations, we discuss the Grothendieck construction. This construction builds a category, denoted $\inta\M$, out of the data of an indexed category $\M$. As we shall see, this category turns out to always be fibred, and all fibred categories arise this way.

Given an indexed category $\M \maps \X\op \to \Cat$, let $\inta \M$ denote the category with:
\begin{itemize}
    \item objects $(x,a)$ with $x \in \X$ and $a \in \M(x)$
    \item morphisms $(f,k) \maps (x,a) \to (y,b)$ with $f \maps x \to y$ a morphism in $\X$, and $k \maps a \to (\M f)(b)$ a morphism in $\M x$;
    \item composition $(g, \ell) \circ (f, k) \maps (x,a) \to (y,b) \to (z,c)$ is given by 
    \begin{equation}
    \label{eq:comp_intM}
        (g, \ell) \circ (f, k) := (g \circ f, \mu_{f,g} \circ \M f(\ell) \circ k)
    \end{equation}
    \item identity map $id_{(x,a)} \maps (x,a) \to (x,a)$ is given by $id_x \maps x \to x$ in $\X$ and $id_a \maps a \to \M(1_x)(a) = a$ in $\M x$.
\end{itemize}
Visualize the composition rule \cref{eq:comp_intM} as follows.
\[
\begin{tikzcd}
    x
    \arrow[d, swap, "f"]
    \arrow[dd, bend left, "gf"]
    &
    a
    \arrow[r, "k"]
    &
    \M fb
    \arrow[r, "\M f(\ell)"]
    &
    \M f(\M g (c))
    \arrow[r, "\mu_{f,g}"]
    &
    \M (gf) (c)
    \\
    y
    \arrow[d, swap, "g"]
    &&
    b
    \arrow[r, "\ell"]
    \arrow[u, mapsto]
    &
    \M g (c)
    \arrow[u, mapsto]
    \\
    z
    &&&
    c
    \arrow[u, mapsto]
    \arrow[uur, mapsto]
\end{tikzcd}
\]
In the case where $\M$ is a strict functor, the composition formula simplifies to the following.
\begin{equation}
    \label{eq:comp_intMstrict}
    (g, \ell) \circ (f, k) = (g \circ f, \M f(\ell) \circ k)
\end{equation}
Notice that as sets, we have
\begin{equation}
\label{eq:hom}
    \inta\M((x,a),(y,b)) = \X(x,y) \times \M(x)(a,\M f(b)).
\end{equation}

\begin{expl}[Ring-module pairs]
\label{ex:mod2}
    The Grothendieck construction applied to the indexed category $\Mod \maps \Ring\op \to \Cat$ from \cref{ex:mod1} gives the category $\inta\Mod$ where objects are ring-module pairs, $(R, M)$ with $R$ a ring and $M$ an $R$-module, and morphisms are pairs $(\phi, \phi^\sharp) \maps (R, M) \to (S,N)$ where $\phi \maps R \to S$ is a ring homomorphism, and $\phi^\sharp \maps M \to \phi^*(N)$ is an $R$-module homomorphism. 
\end{expl}

\begin{lem}
\label{lem:invert}
    Maps $f \maps x \to y$ in $\X$ and $k \maps a \to \M f(b)$ in $\M x$ are isomorphisms if and only if $(f,k) \maps (x,a) \to (y,b)$ is an isomorphism in $\inta \M$. 
\end{lem}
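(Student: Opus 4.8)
The plan is to prove the two implications separately; the forward implication (``$(f,k)$ an isomorphism $\Rightarrow$ $f,k$ isomorphisms'') is essentially formal, and the reverse implication is where the coherence data of the pseudofunctor $\M$ enters.

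\emph{Forward direction.} Suppose $(f,k)$ is an isomorphism with inverse $(g,\ell)\maps(y,b)\to(x,a)$. Comparing first coordinates in $(g,\ell)\circ(f,k)=id_{(x,a)}$ and $(f,k)\circ(g,\ell)=id_{(y,b)}$ gives $g\circ f=id_x$ and $f\circ g=id_y$, so $f$ is an isomorphism and $g=f\inv$. Comparing second coordinates via \eqref{eq:comp_intM}, the same two equations read $\mu_{f,g}\circ\M f(\ell)\circ k=id_a$ in $\M x$ and $\mu_{g,f}\circ\M g(k)\circ\ell=id_b$ in $\M y$ (using the evident identifications $\M(id_x)(a)=a$ and $\M(id_y)(b)=b$). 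Since the compositor components $\mu_{f,g}$ and $\mu_{g,f}$ are invertible, this says precisely that $\M f(\ell)\circ k$ and $\M g(k)\circ\ell$ are isomorphisms. From the first, $\M f(\ell)$ is a split epimorphism; applying the functor $\M f$ to the second shows $\M f(\M g(k))\circ\M f(\ell)$ is an isomorphism, so $\M f(\ell)$ is also a split monomorphism, hence invertible (a morphism which is both a split epimorphism and a split monomorphism is an isomorphism). Therefore $k=\M f(\ell)\inv\circ\bigl(\M f(\ell)\circ k\bigr)$ is a composite of isomorphisms, so $k$ is invertible; by symmetry so is $\ell$, though this is not needed.

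\emph{Reverse direction.} Suppose $f$ and $k$ are isomorphisms. The plan is to exhibit $(f\inv,\ell)\maps(y,b)\to(x,a)$ as a two-sided inverse of $(f,k)$, where $\ell\maps b\to\M(f\inv)(a)$ is the composite $\M(f\inv)(k\inv)\circ\mu_{f\inv,f}\inv$, with $\mu_{f\inv,f}\maps\M(f\inv)(\M f(b))\to b$ the (invertible) compositor, inserting the unitor as well if $\M$ is not assumed to be a normalized pseudofunctor. One then checks $(f\inv,\ell)\circ(f,k)=id_{(x,a)}$ and $(f,k)\circ(f\inv,\ell)=id_{(y,b)}$ directly from \eqref{eq:comp_intM}: the first coordinates are automatic, and the second coordinates collapse using functoriality of the various $\M(-)$'s together with naturality of $\mu$. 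When $\M$ is a \emph{strict} functor this is immediate: by \eqref{eq:comp_intMstrict} one has $\ell=\M(f\inv)(k\inv)$, and the second coordinate of $(f\inv,\ell)\circ(f,k)$ is $\M f(\M(f\inv)(k\inv))\circ k=\M(id_x)(k\inv)\circ k=k\inv\circ k=id_a$ (using $\M f\circ\M(f\inv)=\M(id_x)$), and similarly on the other side.

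The only genuinely non-formal point is the bookkeeping in the reverse direction for a non-strict $\M$. It is cleanest to organize it via the factorization $(f,k)=(f,id_{\M f(b)})\circ(id_x,k)$ in $\inta\M$, which reduces the claim to two sub-statements: that $(id_x,k)$ is invertible whenever $k$ is (immediate, since its base component is an identity), and that the ``cartesian'' lift $(f,id_{\M f(b)})$ of an isomorphism $f$ is invertible. The latter comes down to the single identity $\mu_{f,f\inv}\circ\M f(\mu_{f\inv,f}\inv)=id$ (suitably evaluated), which is an instance of the pseudofunctor associativity axiom for $\M$ applied to the composable triple $x\xrightarrow{f}y\xrightarrow{f\inv}x\xrightarrow{f}y$. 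I expect this coherence check to be the main obstacle, though it is still routine; everything else is formal.
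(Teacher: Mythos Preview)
Your proposal is correct, and it is considerably more thorough than the paper's own treatment. The paper does not really give a proof: it states that the result is ``more or less immediate'' and records only the formula for the inverse, writing it as $(f\inv,\M f\inv(k))$. Your formula $\ell=\M(f\inv)(k\inv)\circ\mu_{f\inv,f}\inv$ is the correct one (the paper's version, as printed, does not even have the right type; presumably $k\inv$ was intended and the compositor suppressed), and your verification via the factorization $(f,k)=(f,id_{\M f(b)})\circ(id_x,k)$ together with the associativity coherence for the triple $(f,f\inv,f)$ is exactly the right way to organise the pseudo case.

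The main difference is in the forward direction. The paper says nothing about it, whereas you give an actual argument: from the two second-coordinate equations you extract that $\M f(\ell)$ is a split epimorphism, then push the other equation through $\M f$ to see it is also a split monomorphism, hence invertible, whence $k$ is invertible. This is a genuine (if short) argument that the paper simply omits; your route via ``split epi $+$ split mono $\Rightarrow$ iso'' is clean and avoids having to identify $\ell$ explicitly before knowing $k$ is invertible.
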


The above result is more or less immediate. Note though that the inverse of $(f,k)$ is not $(f\inv, k\inv)$, as that map does not even have the right type. Instead, the inverse is $(f\inv, \M f\inv(k))$. 

\begin{expl}[Semidirect product]
\label{ex:action2}
    Let $A \maps G\op \to \Grp\hookrightarrow \Cat$ be as in \cref{ex:action1}, and let $\star_G$ and $\star_H$ denote the unique objects of $G$ and $H$ respectively. Then the Grothendieck construction applied to $A$ gives a category $\inta A$ with exactly one object $(\star_G, \star_H)$, and morphisms have the form $(g, h)$ with $g \in G$ and $h \in H$. By \cref{lem:invert}, $\inta A$ is a group. The composition rule \cref{eq:comp_intMstrict} specialized to this case is $(g_1,h_1) \circ (g_2,h_2) = (g_1 \circ g_2, g_2.h_1 \circ h_2)$. The group $\inta A$ is precisely the semidirect product $G \ltimes H$. 
\end{expl}

\begin{expl}[Arrow category]
\label{ex:arrowcat}
    Consider the indexed category of slices of \cref{ex:slice}. The Grothendieck construction $\int(\X/-)$ has for objects pairs $(x, f \maps y \to x)$ and for morphisms $(k, h) \maps (x, f \maps y \to x) \to (z, g \maps w \to z)$ with $k \maps x \to z$, and $h \maps f \to k^*g$. With some work, it can be seen that this category is equivalent to the \define{arrow category} of $\X$, denoted $\X^\to$, consisting of arrows of $\X$ for objects, and commutative squares for morphisms.
\end{expl}

\subsection{Fibrations}

A certain amount of information is lost when an indexed category is turned into its total category. Namely, there is no categorical property of $\inta\M$ which can tell you if two given objects were in the same fiber or not. This data can be preserved however by repackaging it into a functor. For any indexed category $\M$, there is a functor $P_\M \maps \inta \M \to \X$ given by projecting onto the first component for both objects and morphisms. This functor is especially nice. It admits a certain lifting property as we shall see. Such a functor is called a \emph{fibration}. 

Consider a functor $P \maps \A \to \X$. A morphism $\phi \maps a \to b$ in $\A$ over a morphism $f = P(\phi) \maps x \to y$ in $\X$ is called \define{cartesian} if and only if, for all $g \maps x' \to x$ in $\X$ and $\theta \maps a' \to b$ in $\A$ with $P \theta = f \circ g$, there exists a unique arrow $\psi \maps a' \to a$ such that $P \psi = g$ and $\theta = \phi \circ \psi$:
\begin{equation}
\begin{tikzcd}[column sep = huge]
    a'
    \arrow[drr, "\theta"]
    \arrow[dr, dashed, swap, "\exists!\psi"]
    \arrow[dd, dotted, bend right]
    \\&
    a
    \arrow[r, swap, "\phi"]
    \arrow[dd, dotted, bend right]
    &
    b
    \arrow[dd, dotted, bend right]
    &
    \text{in }\A
    \\
    x'
    \arrow[drr, "f \circ g = P \theta"]
    \arrow[dr, swap, "g"]
    \\&
    x
    \arrow[r, swap, "f = P \phi"]
    &
    y
    &
    \text{in }\X
\end{tikzcd}
\end{equation}
For $x \in \ob\X$, the \define{fibre of $P$ over $x$} written $\A_x$, is the subcategory of $\A$ which consists of objects $a$ such that $P(a) = x$ and morphisms $\phi$ with $P(\phi) = 1_x$. The functor $P \maps \A \to \X$ is called a \define{fibration} if and only if, for all $f \maps x \to y$ in $\X$ and $b \in \A_y$, there is an object $a \in \A_x$ and a cartesian morphism $\phi \maps a \to b$ with $P(\phi) = f$; it is called a \define{cartesian lift} of $f$ to $b$. The category $\X$ is then called the \define{base} of the fibration, and $\A $ its \define{total category}.

\begin{lem}
\label{lem:isocartesian}
    An isomorphism is always cartesian.
\end{lem}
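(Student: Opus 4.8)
The plan is to verify the universal property directly from the definition of cartesian morphism. Suppose $\phi \maps a \to b$ is an isomorphism in $\A$, lying over $f = P(\phi) \maps x \to y$ in $\X$. Note first that $f$ is then an isomorphism in $\X$, since $P$ is a functor and $P(\phi\inv)$ provides a two-sided inverse to $f$. Now let $g \maps x' \to x$ in $\X$ and $\theta \maps a' \to b$ in $\A$ be given with $P\theta = f \circ g$. I must produce a unique $\psi \maps a' \to a$ with $P\psi = g$ and $\phi \circ \psi = \theta$.

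For existence, the natural candidate is $\psi := \phi\inv \circ \theta$. Then $\phi \circ \psi = \phi \circ \phi\inv \circ \theta = \theta$ as required, and applying $P$ gives $P\psi = P(\phi\inv) \circ P\theta = f\inv \circ f \circ g = g$, so $\psi$ lies over $g$. For uniqueness, suppose $\psi'$ also satisfies $\phi \circ \psi' = \theta$; composing on the left with $\phi\inv$ forces $\psi' = \phi\inv \circ \theta = \psi$. (The condition $P\psi' = g$ is not even needed for uniqueness here; it follows automatically.) This establishes that $\phi$ is cartesian.

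Since every step is a one-line manipulation, I do not anticipate any genuine obstacle; the only point requiring the slightest care is recording that $f$ is invertible in $\X$ so that the computation $P(\phi\inv) \circ P\theta = f\inv \circ f \circ g = g$ makes sense. One could alternatively phrase the argument more conceptually by observing that, in the notation of \cref{lem:invert} and the surrounding discussion, the cartesian lifts of an isomorphism $f$ to $b$ are the isomorphisms over $f$ with codomain $b$, but the direct verification above is shortest and self-contained.
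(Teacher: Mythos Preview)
Your proof is correct. The paper actually states \cref{lem:isocartesian} without proof, treating it as a standard elementary fact; your direct verification---defining $\psi := \phi\inv \circ \theta$ and checking the two conditions, with uniqueness coming from left-cancellation by $\phi\inv$---is exactly the expected argument and would serve perfectly well if the paper were to include one.
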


A \define{fibred functor} $H \maps P \to Q$ between fibrations $P \maps \A \to \X$ and $Q \maps \B \to \X$ is given by a commutative triangle
\begin{equation}
\label{eq:fibredfunctor}
\begin{tikzcd}[row sep = huge]
    \A
    \arrow[rr, "H"]
    \arrow[dr, swap, "P"]
    &&
    \B
    \arrow[dl, "Q"]
    \\&
    \X
\end{tikzcd}
\end{equation}
where the top $H$ preserves cartesian liftings, meaning that if $\phi$ is $P$-cartesian, then $H\phi$ is $Q$-cartesian.
A \define{fibred natural transformation} is a natural transformation $\beta \maps H \To K$ such that $Q(\beta_a) = Pa$ for all objects $a \in \A$.
\begin{equation}
\label{eq:fibrednaturaltrans}
\begin{tikzcd}[row sep = huge]
    \A
    \arrow[rr, bend left, "H"]
    \arrow[rr, phantom, "\Downarrow \beta"]
    \arrow[rr, bend right, swap, "K"]
    \arrow[dr, swap, "P"]
    &&
    \B
    \arrow[dl, "Q"]
    \\&
    \X
\end{tikzcd}
\end{equation}

If $P\maps \A \to \X$ is a fibration, assuming the axiom of choice, we may select a cartesian arrow over each $f\maps x \to y$ in $\X$ and $b \in \A_y$, denoted by $\Cart(f, b) \maps f^*(b) \to b$. Such a choice of cartesian liftings is called a \define{cleaving} for $P$, which is then called a \define{cloven fibration}. Since identity maps are always cartesian, it is always possible to choose a cleaving where the lift of an identity is again an identity. This can be convenient, and will be used in the next section.

Suppose $P \maps \A \to \X$ is a cloven fibration. For any map $f \maps x \to y$ in the base category $\X$, the data of the cleaving can be used to define a \define{reindexing functor} between the fibre categories.
\begin{equation}
\label{reindexing}
    f^*\maps \A_y \to \A_x
\end{equation}
Send an object $b \in \A_y$ to the domain of the chosen cartesian lift of $f$ to $b$, $\Cart(f,b) \maps f^*(b) \to b$. Given a map $\psi \maps b \to b'$ in $\A_y$, define $f^*(\psi)$ by the universal property of cartesian maps as follows.
\[
\begin{tikzcd}[column sep = huge]
    f^*(b)
    \arrow[rr, "{\Cart(f,b)}"]
    \arrow[dr, dashed, swap, "\exists!f^*(\psi)"]
    \arrow[dd, dotted, bend right]
    &&
    b
    \arrow[ddd, dotted, bend left]
    \arrow[d, "\psi"]
    \\&
    f^*(b')
    \arrow[r, swap, "{\Cart(f,b')}"]
    \arrow[dd, dotted, bend right]
    &
    b'
    \arrow[dd, dotted, bend right]
    &
    \text{in }\A
    \\
    x
    \arrow[drr, "f"]
    \arrow[dr, swap, "id_x"]
    \\&
    x
    \arrow[r, swap, "f"]
    &
    y
    &
    \text{in }\X
\end{tikzcd}
\]
We leave it to the interested reader to verify this defines a functor. 

It can be verified by the cartesian universal property that $1_{\A_x} \cong (1_x)^*$ and that for composable morphism in the base category, $f^* \circ g^* \cong (g \circ f)^*$. Specific isomorphisms inhabiting these relations can be derived from a cleaving, a fact we shall employ in the next section. If these isomorphisms are equalities, we say the fibration is \define{split}.

Let $\M \maps \X\op \to \Cat$ be an $\X$-indexed category. Define the functor $P_\M \maps \inta \M \to \X$ by projecting onto the first component for both objects and morphisms. 
\begin{lem}
    The functor $P_\M$ defined above is always a fibration. Moreover, if $\M$ is a strict functor, then $P_\M$ is a split fibration.
\end{lem}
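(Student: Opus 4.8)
The plan is to exhibit explicit cartesian lifts. Given $f \maps x \to y$ in $\X$ and an object $(y,b)$ of $\inta\M$, I claim that
\[
(f, \mathrm{id}_{\M f(b)}) \maps (x, \M f(b)) \to (y, b)
\]
is a cartesian lift of $f$. It visibly lies over $f$, so the content is the universal property. Given $g \maps x' \to x$ in $\X$ and $\theta = (f g, \ell) \maps (x', a') \to (y, b)$ with $P_\M \theta = f \circ g$, any candidate filler must have the form $\psi = (g, m) \maps (x', a') \to (x, \M f(b))$ with $m \maps a' \to \M g (\M f(b))$ in $\M x'$; expanding $(f, \mathrm{id}) \circ (g, m)$ by the composition rule \cref{eq:comp_intM} gives $\bigl(f g,\ (\mu_{g,f})_b \circ \M g(\mathrm{id}) \circ m\bigr) = \bigl(f g,\ (\mu_{g,f})_b \circ m\bigr)$. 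Requiring this to equal $\theta$ forces $(\mu_{g,f})_b \circ m = \ell$, and since the compositor $\mu_{g,f} \maps \M g \circ \M f \To \M(f g)$ is invertible, this has the unique solution $m = (\mu_{g,f})_b\inv \circ \ell$. Hence $\psi$ exists and is unique, so $(f, \mathrm{id}_{\M f(b)})$ is cartesian and $P_\M$ is a fibration. In fact this furnishes a canonical cleaving $\Cart\bigl(f, (y,b)\bigr) := (f, \mathrm{id}_{\M f(b)})$, which lifts identities to identities on the nose whenever the unitors are identities, in particular when $\M$ is strict. (One could alternatively note that morphisms of this shape lying over isomorphisms are isomorphisms by \cref{lem:invert}, hence cartesian by \cref{lem:isocartesian}, but the explicit argument above is what is needed for the splitting claim.)

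For the second assertion, suppose $\M$ is a strict functor, so all compositors $\mu_{f,g}$ and unitors are identities, $\M(1_x) = \mathrm{id}_{\M x}$, and $\M(g f) = \M f \circ \M g$. Using the cleaving above, I would compute the reindexing functor $f^* \maps (\inta\M)_y \to (\inta\M)_x$ explicitly: on objects $f^*(y,b) = (x, \M f(b))$ by definition, and for a fibre morphism $(1_y, k) \maps (y,b) \to (y,b')$ (where, by strictness, $k \maps b \to b'$ in $\M y$) the cartesian universal property identifies $f^*(1_y, k)$ with $(1_x, \M f(k))$; this is once more a uniqueness argument with \cref{eq:comp_intM}, now with every compositor trivial. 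With these formulas the required equalities are immediate: $(1_x)^*(x,a) = (x, \M(1_x)(a)) = (x,a)$ and $(1_x)^*(1_x,k) = (1_x, \M(1_x)(k)) = (1_x,k)$, so $(1_x)^* = \mathrm{id}$; and for composable $f \maps x \to y$, $g \maps y \to z$, $(f^* \circ g^*)(z,c) = f^*(y, \M g(c)) = (x, \M f(\M g(c))) = (x, \M(g f)(c)) = (g f)^*(z,c)$, with the analogous computation on morphisms, so $f^* \circ g^* = (g f)^*$. Therefore $P_\M$ is split.

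I do not expect a serious obstacle here; the only thing to be careful about is keeping straight the order of composition in \cref{eq:comp_intM} and remembering that the compositor $\mu_{g,f}$, though invertible, is genuinely present in the non-strict case — it is precisely its invertibility that pins down the filler $\psi$ uniquely. The mild subtlety in the splitting argument is that the action of $f^*$ on morphisms must be extracted from its defining universal property rather than guessed; once strictness collapses the compositors and unitors, this and the two splitting identities are one-line checks.
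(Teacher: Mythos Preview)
Your proposal is correct and follows precisely the approach the paper sketches: it names the same cartesian lift $(f,\mathrm{id}_{\M f(b)})$ and then carries out the computation the paper describes as ``no more than a calculation.'' Your treatment of the splitting claim via explicit reindexing functors is likewise the intended argument, simply spelled out in more detail than the paper provides.
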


The cartesian lift of an arrow $f \maps x \to y$ in $\X$ to an object $(y,b)$ above $y$ is $(f, id_{\M fb}) \maps (x, \M fb) \to (y,b)$. Demonstrating that this satisfies the conditions of being cartesian involves no more than a calculation. 

\begin{expl}[Products]
\label{ex:products}
    Let $\X$ and $\Y$ be categories. The projection functor $\pi_\X \maps \X \times \Y \to \X$ is a fibration. Given a map $f \maps x \to x'$ in $\X$ and an object $(x',y)$ above $x'$, the map $(f, id_y) \maps (x,y) \to (x',y)$ is a cartesian lift of $f$ to $(x',y)$. The fiber over any object $x$ is equivalent to $\Y$. The reindexing functors provided by the cleaving above are always identity. Thus, $\X \times \Y$ is equivalent to the Grothendieck construction of \define{the constant indexed category} $\Delta\Y \maps \X\op \to \Cat$ given by $x \mapsto \Y$ for all $x \in \X$, and $f \mapsto id_\Y$ for all maps $f$ in $\X$.
\end{expl}

\begin{expl}[Split group extensions]
\label{ex:action3}
    Let $G$, $H$, and $A$ be as in \cref{ex:action1} and \cref{ex:action2}. The condition of the projection $p \maps \inta A = G \ltimes H \to G$ being a fibration essentially amounts to saying it is surjective, since all invertible maps in $\inta A$ are cartesian. This makes up the surjective part of the short exact sequence expressing $\inta A$ as an extension of $G$ by $H$.
    \[0\to H \xrightarrow{i} \inta A \xrightarrow{p} G \to 0
    \]
    The fact that $A$ is a strict functor tells us that the the compositor $\mu_{g_1, g_2} \maps Ag_1 \circ Ag_2 \To A(g_2g_1)$ is the identity, and thus we have a commutative diagram.
    \[
    \begin{tikzcd}
        A(g_2g_1)(\star) 
        \arrow[d, equals]
        \arrow[drr, "\Cart(g_2g_1)"]
        \\
        Ag_1(Ag_2(\star))
        \arrow[r, swap, "\Cart(g_1)"]
        &
        Ag_2(\star)
        \arrow[r, swap, "\Cart(g_2)"]
        &
        \star
    \end{tikzcd}
    \]
    Note that all the objects in this diagram are the unique object of $\inta A$. The interesting part of the diagram, as usual, is what it says about the morphisms. It says that $\Cart$ is in fact a group homomorphism $\Cart \maps G \to \inta A$, splitting the sequence. This recovers the well-known fact that split extensions are always semidirect products, whence the naming of split fibrations.
\end{expl}

\begin{expl}[Codomain fibration]
    Let $\X$ and $\X/-$ be as in \cref{ex:slice} and \cref{ex:arrowcat}. The projection $P\maps \int(\X/-) \simeq \X^\to \to \X$ maps an arrow to its codomain, and a commutative square to its lower edge. This fibration is known as the \define{codomain fibration} for obvious reasons, and the \define{fundamental fibration} because it ties together so many fundamental concepts of category theory.
\end{expl}

The indexed category $\M$ can be recovered up to isomorphism from the fibration $\M$. The fibre over an object $x$ in the base is equivalent to the category $\M(x)$. Identifying these, the reindexing functor $f^*$ induced by a map $f \maps x \to y$ in the base is precisely the functor $\M(f) \maps \M(y) \to \M(x)$. Moreover, every fibration comes from the Grothendieck construction of an indexed category, and every indexed category comes from a fibration. Indeed, the Grothendieck correspondence gives an equivalence between these two pieces of data. The idea of the following result was present in the work of Grothendieck where the eponymous construction was introduced \cite{Grothendieckcategoriesfibrees}. A detailed proof can be found in \cite{2DCats}.

\begin{thm}
\label{thm:Grothendieck}
    The Grothendieck construction specifies an equivalence of 2-categories $\ICat(\X) \simeq \Fib(\X)$, which restricts to an equivalence of categories $\ICat_s(\X) \simeq \Fib_s(\X)$.
\end{thm}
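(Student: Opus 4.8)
The plan is to exhibit the equivalence explicitly, by constructing $2$-functors in both directions and pseudonatural equivalences witnessing that the two round-trips are equivalent to identities, and then checking that everything cuts down to the strict case. First I would upgrade the assignment $\M \mapsto (P_\M \maps \inta\M \to \X)$ to a $2$-functor $\inta \maps \ICat(\X) \to \Fib(\X)$: on objects it is this assignment, which lands in $\Fib(\X)$ by the lemma above; a $1$-cell, i.e.\ a pseudonatural transformation $\alpha \maps \M \To \N$, is sent to the functor $(x,a) \mapsto (x, \alpha_x a)$ on objects and $(f,k) \mapsto (f,\ (\alpha\text{-structure }2\text{-cell at }f)_b \circ \alpha_x(k))$ on morphisms, the pseudonaturality $2$-cell of $\alpha$ at $f$ supplying exactly the comparison $\alpha_x(\M f(b)) \to \N f(\alpha_y b)$ needed for the target to be well-typed; this is visibly over $\X$, and it preserves cartesian morphisms because the canonical lifts $(f, id)$ go to morphisms of the form $(f,\text{iso})$, which are cartesian by \cref{lem:isocartesian} together with closure of cartesian morphisms under composition with isomorphisms; a $2$-cell, i.e.\ a modification $m \maps \alpha \to \beta$, is sent to the fibred natural transformation with component $(id_x, m_x(a))$ at $(x,a)$. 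That $\inta$ respects composition and units is a routine check.

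Next, for the reverse $2$-functor $\mathrm{Ind} \maps \Fib(\X) \to \ICat(\X)$, I would choose (invoking the axiom of choice) a cleaving for each fibration $P \maps \A \to \X$, normalized so identities lift to identities, and set $\mathrm{Ind}(P)(x) = \A_x$ and $\mathrm{Ind}(P)(f) = f^*$, the reindexing functor of \cref{reindexing}, with compositor and unitor the canonical isomorphisms $f^* g^* \cong (gf)^*$ and $1_{\A_x} \cong (1_x)^*$ induced by the cleaving; these satisfy the pseudofunctor coherences because they are uniquely characterized by cartesian universal properties. A fibred functor $H \maps P \to Q$ restricts over each $x$ to $H_x \maps \A_x \to \B_x$, and since $H$ carries cartesian lifts to cartesian lifts there are canonical invertible comparisons $H_x \circ f^* \cong f^* \circ H_y$ making $(H_x)_x$ a pseudonatural transformation $\mathrm{Ind}(P) \To \mathrm{Ind}(Q)$; a fibred natural transformation $\beta$ restricts fiberwise to a modification. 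Verifying the pseudofunctor and pseudonaturality axioms here is the most bookkeeping-intensive part.

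Then I would analyze the two composites. For $\inta \circ \mathrm{Ind}$, given $P \maps \A \to \X$ define $E_P \maps \inta(\mathrm{Ind}(P)) \to \A$ over $\X$ by $(x,a) \mapsto a$ and $(f, k \maps a \to f^* b) \mapsto \Cart(f,b) \circ k$, with the map the other way given on objects by $a \mapsto (Pa, a)$ and on a morphism $\phi \maps a \to b$ by $(P\phi,\ \text{the unique } k \text{ with } \Cart(P\phi, b) \circ k = \phi)$; the bijection on hom-sets is precisely the cartesian universal property, so $E_P$ is a fully faithful, surjective-on-objects fibred functor, hence an equivalence, and it is natural in $P$. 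For $\mathrm{Ind} \circ \inta$, equip $\inta\M$ with its canonical cleaving (cartesian lift of $f$ to $(y,b)$ equal to $(f, id_{\M f(b)})$); then the fiber over $x$ is the subcategory on objects $(x,a)$ and morphisms $(id_x, k)$, which is isomorphic to $\M(x)$ via $(x,a) \leftrightarrow a$ (up to the unitor of $\M$), and under this identification the reindexing functor of $f$ is $\M(f)$ with matching compositors; assembling the fiberwise isomorphisms yields a pseudonatural isomorphism $\mathrm{Ind}(\inta\M) \cong \M$, natural in $\M$.

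Finally, I would check that $\inta$, $\mathrm{Ind}$, the equivalences $E_P$, and the isomorphisms $\mathrm{Ind}(\inta\M) \cong \M$ assemble into an equivalence of $2$-categories, and then restrict: if $\M$ is a strict functor, its canonical cleaving on $\inta\M$ has identity reindexing comparisons, so $P_\M$ is split and $\inta$ restricts to $\ICat_s(\X) \to \Fib_s(\X)$; conversely, for a split fibration the canonical isomorphisms are identities, so $\mathrm{Ind}(P)$ is a genuine functor and $\mathrm{Ind}$ restricts to $\Fib_s(\X) \to \ICat_s(\X)$; on these strict subcategories $\mathrm{Ind} \circ \inta$ is the identity on the nose, while $\inta \circ \mathrm{Ind}$ remains the equivalence $E_P$, now of ordinary categories, giving $\ICat_s(\X) \simeq \Fib_s(\X)$. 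The hard part will be purely organizational rather than conceptual: faithfully tracking the compositor, unitor, and pseudonaturality $2$-cells through both constructions and both round-trips, and confirming that the witnessing data has the full strength of an equivalence of $2$-categories rather than a mere biequivalence; the dependence of $\mathrm{Ind}$ on a chosen cleaving is harmless, affecting the output only up to the pseudonatural isomorphisms just described.
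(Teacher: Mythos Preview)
The paper does not actually prove this theorem: it states the result and defers to the literature, writing that ``a detailed proof can be found in \cite{2DCats}.'' So there is no in-paper proof to compare against. Your proposal is the standard argument and is correct in outline; it is essentially the proof one finds in Johnson--Yau, which is what the paper cites. Two small quibbles: your justification that $\inta\alpha$ preserves cartesian morphisms should be phrased as factoring $(f,\text{iso})$ as the chosen cartesian lift $(f,id)$ followed by a vertical isomorphism $(id,\text{iso})$, using that cartesian morphisms are closed under composition (rather than appealing directly to \cref{lem:isocartesian} for the whole arrow); and in the strict case $\mathrm{Ind}\circ\inta$ is naturally isomorphic, not literally equal, to the identity, since the fiber $(\inta\M)_x$ is only canonically isomorphic to $\M(x)$ via $(x,a)\mapsto a$. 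Neither affects the validity of the argument.
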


\section{Extending automorphism groups}
\label{sec:Auto}

In this section, we describe the relationship between the automorphism groups of the objects in a category $\X$, and the automorphism groups of the objects in $\inta\M$, where $\M$ is some indexed category. Much of this relationship is known to experts, and some detail can be found for example in \cite{BaezShulman}. In general, an object in a category can have both invertible and non-invertible endomorphisms, and so it is of general interest to discuss the endomorphism monoids. However, in the context of $\FI$-type categories, all endomorphisms are invertible, and so this reduces to the case of automorphism groups anyway. For some discussion of monoids and the Grothendieck construction, see \cite{NetworkModels, monoidGrothConst}.

Let $\M \maps \X\op \to \Cat$ be an indexed category, and $(x,a)$ an object in $\inta \M$. If we assume without loss of generality that $\M x$ is skeletal, \cref{lem:invert} tells us that an automorphism $(f,k) \maps (x,a) \to (x,a)$ consists of an automorphism $f \maps x \to x$ in $\X$, and an automorphism $k \maps a \to a$ in $\M x$. The automorphism group of $(x,a)$ in $\inta \M$ is precisely the category $\inta\M|_x$ where $\M|_x \maps \Aut(x)\op \to \Grp$ is the appropriate restriction of $\M$ to the subcategory $\Aut(x)$ of $\X$ consisting of automorphisms of $x$. 
Thus we can understand the automorphism groups of $\inta \M$ by first understanding what the Grothendieck construction does to pseudofunctors of the form $A \maps G \op \to \Grp$. Recall from \cref{ex:action1}, \cref{ex:action2}, and \cref{ex:action3} that if $A$ is a strict functor, then it is an action of $G$ on the group $A(\star)$, $\inta A$ is precisely the semidirect product, the fibration is the surjective part of the short exact sequence expressing $\inta A$ as an extension, and the fact that it is a split fibration corresponds exactly to the sequence being split.
This section is dedicated to the non-strict case.

\begin{prop}
    Conflating groups with one-object groupoids, surjective homomorphisms are precisely fibrations.
\end{prop}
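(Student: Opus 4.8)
The plan is to prove the two implications separately, reducing each to the fact that a one-object groupoid has only isomorphisms as morphisms, together with \cref{lem:isocartesian}.

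First I would treat necessity. Suppose $\phi \maps G \to H$, viewed as a functor between one-object groupoids, is a fibration. Given any $h \in H$, viewed as a morphism $h \maps \star_H \to \star_H$, the fibration condition applied to $h$ and to the unique object $\star_G$ lying over $\star_H$ produces, in particular, some morphism of $G$ over $h$ --- that is, some $g \in G$ with $\phi(g) = h$. Hence $\phi$ is surjective.

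For sufficiency, suppose $\phi$ is surjective. Fix $h \in H$ and choose $g \in G$ with $\phi(g) = h$. Since the only object of $G$ lying over $\star_H$ is $\star_G$, to conclude that $\phi$ is a fibration it suffices to show that $g \maps \star_G \to \star_G$ is a cartesian lift of $h$. But $G$ is a groupoid, so $g$ is an isomorphism, and by \cref{lem:isocartesian} it is cartesian. (If one prefers a direct check: given $g_1 \in H$ and $t \in G$ with $\phi(t) = hg_1$, the requirement $t = g \circ s$ forces $s = g\inv t$, which is therefore the unique candidate, and indeed $\phi(s) = \phi(g)\inv \phi(t) = h\inv h g_1 = g_1$, as needed.)

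I do not expect a genuine obstacle here: the entire content is in unwinding the definition of fibration for a functor whose domain and codomain each have a single object, where it collapses to the bare existence of $\phi$-preimages, after which \cref{lem:isocartesian} finishes the job. The one point worth stating explicitly is that, because a groupoid has no non-invertible morphisms, cartesianness imposes no constraint beyond surjectivity.
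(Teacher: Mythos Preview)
Your proposal is correct and follows essentially the same approach as the paper's proof: both directions reduce to the observation that cartesian lifts exist precisely when preimages exist, with \cref{lem:isocartesian} supplying cartesianness of any chosen preimage in the surjective-implies-fibration direction. Your version is slightly more explicit (noting that $\star_G$ is the unique object over $\star_H$, and including the optional direct verification of the cartesian property), but the argument is the same.
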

\begin{proof}
    Let $p \maps E \to G$ be a fibration between groups. For each $g \in G$, there is a cartesian arrow $h$ in $E$ such that $p(h) = g$.
    
    Let $p \maps E \to G$ be a surjective group homomorphism, i.e.\ a full functor between one-object groupoids. For an element $g \in G$, there is an element $h \in E$ such that $f(h) = g$. By \cref{lem:isocartesian}, $h$ is $p$-cartesian.
\end{proof}

\subsection{The 2-category of groups}
The category of groups $\Grp$ is a full subcategory of $\Cat$, but $\Cat$ is naturally a 2-category, with natural transformations as the 2-morphisms. What then are natural transformations between group homomorphisms? Let $G$ and $H$ be groups, and $f,k \maps G \to H$ be homomorphisms. A natural transformation $\alpha \maps f \To k$ consists of a map in $H$ for each object in $G$. Since there is only one object in $G$, $\alpha$ has a single component, an element of $H$ which we also refer to as $\alpha$. This element must satisfy the naturality condition:
\[
\begin{tikzcd}
    \bullet 
    \arrow[r, "\alpha"]
    \arrow[d, swap, "f(g)"]
    &
    \bullet 
    \arrow[d, "k(g)"]
    \\
    \bullet 
    \arrow[r, swap, "\alpha"]
    &
    \bullet
\end{tikzcd}
\]
i.e.\ $\alpha \cdot f(g) = k(g) \cdot \alpha$ for all $g \in G$. In this scenario, $f$ and $k$ are related by an inner automorphism. We sometimes refer to such a natural transformation as an \define{intertwining element}. As a special case, a natural transformation of the form $\alpha \maps id_G \To id_G \maps G \to G$ is precisely an element in the center of $G$. The horizontal composite $\alpha_2 \ast \alpha_1 \maps f_2 \circ f_1 \To g_2 \circ g_1$ of natural transformations $\alpha_1 \maps f_1 \To k_1$ and $\alpha_2 \maps f_2 \To k_2$
\[
\begin{tikzcd}[column sep = large]
    G
    \arrow[r, bend left = 40, "f_1"]
    \arrow[r, phantom, "\Downarrow \alpha_1"]
    \arrow[r, bend right = 40, swap, "k_1"]
    &
    H
    \arrow[r, bend left = 40, "f_2"]
    \arrow[r, phantom, "\Downarrow \alpha_2"]
    \arrow[r, bend right = 40, swap, "k_2"]
    &
    K
\end{tikzcd}
\]
is given by $\alpha_2 \ast \alpha_1 = k_2(\alpha_1) \alpha_2$. Vertical composition of natural transformations $\alpha$ and $\beta$
\[
\begin{tikzcd}[column sep = huge]
    G
    \arrow[r, bend left=80, "f"', swap, ""{name = f}]
    \arrow[r, "k"description, swap, ""{name = g}, ""'{name = gg}]
    \arrow[r, bend right=80, ""{name = h}, "\ell"']
    &
    H
    \arrow[from = f, to = gg, Rightarrow, "\alpha"]
    \arrow[from = g, to = h, Rightarrow, "\beta"]
\end{tikzcd}\]
is just given by multiplication $\beta \circ \alpha = \beta\alpha$.

Note that this gives us for each pair of groups $G$ and $H$ a category $\Grp(G,H)$ where the objects are group homomorphisms $G \to H$, and morphisms are these intertwining elements, allowing us to consider $\Grp$ as a sub-2-category of $\Cat$. Since these intertwining elements have inverses, $\Grp(G,H)$ is actually a groupoid, making $\Grp$ a groupoid-enriched category, or (2,1)-category. It is also equivalent to the full sub-2-category of the (2,1)-category of groupoids consisting of the connected groupoids.

\begin{prop}
    A pseudofunctor $A \maps G\op \to \Grp$ consists of 
    \begin{itemize}
        \item a group $H$
        \item a function $A \maps G \to \Aut(H)$ (we still denote it like a right action when convenient, for example $Ag(h) = h.g$)
        \item a function $\phi \maps G \times G \to H$
    \end{itemize}
    such that
    \begin{itemize}
        \item $(h.g_1).g_2 = \phi_{g_1, g_2}\inv (h.(g_1g_2)) \phi_{g_1, g_2}$ for all $h \in H$
        \item for three elements $g_1, g_2, g_3 \in G$, we have $\phi_{g_3g_2,g_1}(\phi_{g_2,g_3}.g_1) = \phi_{g_3,g_2g_1}$
    \end{itemize}
\end{prop}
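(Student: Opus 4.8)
The plan is to unwind the definition of a pseudofunctor $A \maps G\op \to \Grp$ using the explicit description of the 2-category $\Grp$ developed immediately above. First I would record what the data of a pseudofunctor $\C \to \D$ amounts to in this case: since $G\op$ is a one-object groupoid, the object assignment picks out a single group $H := A(\star)$; the assignment on hom-categories is a functor $G\op(\star,\star) \to \Grp(H,H)$, which on objects is a function $G \to \Aut(H)$ (here one uses that $\Grp(H,H)$ has group homomorphisms as objects, and because $G$ is a groupoid and functors preserve invertibility, every $Ag$ lands in $\Aut(H)$) and on morphisms sends the intertwining elements of $G\op$ to intertwining elements of $\Grp(H,H)$. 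The compositor $\mu_{g_1,g_2} \maps A(g_1)\circ A(g_2) \To A(g_2 g_1)$ (note the order: we are in $G\op$, so composition of $g_1$ then $g_2$ in $G\op$ is $g_2 g_1$ in $G$) is, by the description of 2-morphisms in $\Grp$, precisely an element of $H$; calling it $\phi_{g_1,g_2}$ gives the function $\phi \maps G \times G \to H$. The unitor $\eta \maps id_H \To A(1)$ is an element of $H$, but one can always choose a normalized cleaving so that $A(1) = id_H$ and $\eta = 1$; I would either absorb this into the statement or remark that the surviving content is the two displayed equations. Since $G\op$ is locally discrete, the functoriality of the hom-assignment on 2-cells and the naturality of $\mu$ and $\eta$ in $g_1, g_2$ carry no information beyond what the coherence axioms already impose.

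Next I would translate the two pseudofunctor coherence axioms. The first displayed equation, $(h.g_1).g_2 = \phi_{g_1,g_2}\inv (h.(g_1 g_2)) \phi_{g_1,g_2}$, should come from the statement that $\mu_{g_1,g_2}$ is a 2-morphism, i.e.\ an \emph{intertwining element}, from $A(g_1)\circ A(g_2)$ to $A(g_2 g_1)$: the naturality square for intertwining elements, $\alpha \cdot f(g) = k(g)\cdot \alpha$, applied with $\alpha = \phi_{g_1,g_2}$, $f = A(g_1)\circ A(g_2)$, $k = A(g_2 g_1)$, and evaluated at a general $h \in H$, gives $\phi_{g_1,g_2}\cdot \bigl((h.g_1).g_2\bigr) \cdot \phi_{g_1,g_2}$-type relation — one must be careful with the right-action conventions and with the op, but rearranging yields exactly the claimed conjugation formula. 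The second equation is the pentagon/associativity coherence for the compositor: the two ways of reassociating $A(g_1)\circ A(g_2)\circ A(g_3)$ and collapsing via $\mu$ must agree, and using the horizontal composition formula $\alpha_2 \ast \alpha_1 = k_2(\alpha_1)\alpha_2$ together with vertical composition being multiplication, this becomes an equation between products of the $\phi$'s and their images under the action, which should simplify to $\phi_{g_3 g_2, g_1}(\phi_{g_2,g_3}.g_1) = \phi_{g_3, g_2 g_1}$.

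For the converse direction, given the data and the two axioms, I would assemble a pseudofunctor: set $A(\star) = H$, let the action define the object part of each hom-functor, check that each $Ag$ is a genuine group homomorphism and that the intertwining-element assignment is functorial (automatic, as $G\op$ is discrete on 2-cells), define $\mu_{g_1,g_2} = \phi_{g_1,g_2}$ and $\eta = 1$, verify via the first axiom that each $\mu_{g_1,g_2}$ really is an intertwining element of the right type, verify via the second axiom that the associativity coherence holds, and check the unit coherences (trivial with the normalization). This is bookkeeping once the dictionary is set up.

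The main obstacle I expect is purely notational: keeping the handedness straight through the combination of (i) $G\op$ versus $G$, which flips the order of composition and hence which pairs $(g_1,g_2)$ the compositor indexes; (ii) the right-action convention $Ag(h) = h.g$ versus the usual left-to-right reading of $A(g_1)\circ A(g_2)$; and (iii) the explicit horizontal-composition formula $\alpha_2 \ast \alpha_1 = k_2(\alpha_1)\alpha_2$, which is itself already "twisted." A slightly subtler point worth stating carefully is that $A$ a priori only gives a function $G \to \Grp(H,H) = \Hom(H,H)$, and one must invoke that $G$ (equivalently $G\op$) is a groupoid to conclude the image lies in $\Aut(H)$; this is where "$G\op \to \Grp$" rather than "$G\op \to \Mon$" is used. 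Once the conventions are pinned down at the outset, both directions are a direct comparison of the unwound axioms with the displayed equations, and I would present the proof as exactly that comparison rather than rederiving the general pseudofunctor coherence conditions.
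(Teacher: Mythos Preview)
Your proposal is correct and follows essentially the same approach as the paper: unwind the data and coherence of a pseudofunctor $G\op \to \Grp$ using the explicit description of 2-morphisms in $\Grp$ as intertwining elements, with the unitor normalized to be trivial. If anything, your identification of the first displayed equation as the condition that $\phi_{g_1,g_2}$ is a 2-morphism (an intertwining element) is more precise than the paper's own phrasing, which somewhat loosely calls the two conditions ``the unit law and the associativity law''; your reading is the correct one.
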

\begin{proof}
    As with a strict action, the unique object of $G$ is mapped to an object of the target category, namely $H$, and each element of $G$ is mapped to a morphism in the target category, an automorphism of $H$. For strict functors, preservation of composites and identity maps is a property, but for pseudofunctors, preservation of composites and identity maps is mediated by invertible 2-morphisms, intertwining elements in our case. As noted previously, the identity preservation data can be chosen without loss of generality to be trivial. The data of the compositor for $A$ is an intertwining element of $H$ for every pair of composable morphisms in $G$. Since every pair of maps is composable in $G$, this is captured by a function of the form $\phi \maps G\times G \to H$. The two conditions are precisely the unit law and the associativity law for a pseudofunctor.
\end{proof}

This is essentially a right action of $G$ on $H$, where the action laws hold up to conjugation by a special family of elements. One might want to call such a thing a ``twisted (right) action'' of $G$ on $H$ \cite{TwistedAction}.

\subsection{Twisted action from surjection}

How is a pseudofunctor $A \maps G\op \to \Grp$ constructed from a surjection of groups $p \maps E \to G$? Keep in mind that any surjection is part of a unique short exact sequence by taking the kernel $K:=\ker p$.
\[0 \to K \xrightarrow i E \xrightarrow p G \to 0\]
So $A$ must send the unique object $\star$ of $G$ to the group $K$, since $K$ is the fiber of $p$ over $\star$. Let $s \maps G \to E$ be a (set-theoretic) section of $p$. This provides a cleaving of $p$ as a fibration, with $\Cart(g, \star_E) = s(g)$ for each $g \in G$. We use this to define the pseudofunctors action on  morphisms by $Ag \maps K \to K$ by $Ag(k) = s(g)\inv k s(g)$. If $s$ happens to be a group homomorphism, making this extension split, then we would have
\begin{align*}
    Ag(Ah(k))
    &= s(g)\inv s(h)\inv k s(h) s(g)
    \\&= s(hg)\inv k s(hg)
    \\&= A(hg)(k).
\end{align*}
This tells us that $A$ is a strict functor, and we are done defining it. Otherwise, we must specify the compositor and unitor data. 

To specify the unitor $\phi_0 \maps A(id_\star) = A(e_G) \To id_K$, we only need one element $\phi_0$ of $K$ such that $Ae_G(k) = \phi_0\inv k\phi_0$. Let $\phi_0 := s(e_G)$. Without loss of generality, we can assume that $s(e_G) = e_E$, and thus $\phi_0 = e_E$. To specify the compositor, for each pair of elements $\sigma, \tau \in G$, we need an element $\phi_{\sigma, \tau} \in K$ such that $\phi_{\sigma, \tau} \maps A\sigma \circ A\tau \To A(\tau \sigma)$, meaning $\phi_{\sigma, \tau} A\sigma(A\tau(k)) = A(\tau\sigma)(k) \phi_{\sigma, \tau}$. Let 
\begin{align*}
    \phi_{\sigma, \tau} 
    &:= s(\tau\sigma)\inv s(\tau) s(\sigma).
\end{align*}

\begin{prop}
    Let $p \maps E \to G$ be a surjective group homomorphism. A set-theoretic section $s \maps G \to E$ determines a twisted (right) action $(A,\phi)$ of $G$ on $\ker p$ by $Ag(k) = s(g)\inv ks(g)$ and $\phi_{\sigma, \tau} = s(\tau\sigma)\inv s(\tau) s(\sigma)$. This is independent of the choice of section. Moreover, every twisted action of $G$ is determined by a surjection onto $G$ in this way.
\end{prop}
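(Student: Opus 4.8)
The plan is to establish the three assertions in turn: that the formulas $Ag(k)=s(g)\inv k s(g)$ and $\phi_{\sigma,\tau}=s(\tau\sigma)\inv s(\tau)s(\sigma)$ satisfy the two axioms of the preceding proposition, so that $(A,\phi)$ really is a twisted action; that the resulting twisted action is independent of $s$ up to isomorphism; and that every twisted action of $G$ arises from a surjection in this way.

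For the first assertion I would open with the two type-checking steps. Since $p\circ s=\mathrm{id}_G$, for $k\in\ker p$ one has $p(s(g)\inv k s(g))=g\inv e_G g=e_G$, so conjugation by $s(g)$ restricts to an automorphism $Ag$ of $\ker p$; likewise $p(s(\tau\sigma)\inv s(\tau)s(\sigma))=(\tau\sigma)\inv\tau\sigma=e_G$, so $\phi_{\sigma,\tau}\in\ker p$. Because $Ag$ is conjugation by $s(g)$ and $\phi_{\sigma,\tau}$ is built from the elements $s(-)$, both axioms become identities between words in $E$ in which every occurrence of $s$ applied to a product telescopes against its own inverse: the first axiom drops out after a single cancellation, and the associativity (nonabelian cocycle) axiom is the one computation genuinely worth writing out. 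I expect that computation --- and in particular keeping straight the $G$-versus-$G\op$ order of the arguments, which is exactly why $\tau\sigma$ and not $\sigma\tau$ appears --- to be the fiddliest point of the proof.

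For independence of the section, observe that two sections differ by a unique function $c\maps G\to\ker p$ with $s'(g)=s(g)c(g)$, because $p(s(g)\inv s'(g))=e_G$. The twisted action $(A',\phi')$ built from $s'$ then has $A'g$ equal to $Ag$ followed by conjugation by $c(g)$, and $\phi'$ equal to $\phi$ modified by $c$ in precisely the way a nonabelian cocycle changes under a coboundary. This is exactly the data of a pseudonatural isomorphism $(A,\phi)\To(A',\phi')$ of twisted actions whose object-component is the identity and whose structure $2$-cells are the elements $c(g)$; I would finish by checking the two pseudonaturality axioms, which reduce to the relation between $\phi'$, $\phi$, and $c$ just noted. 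Literal equality of $(A,\phi)$ and $(A',\phi')$ fails unless $\ker p$ is central, so ``independent of the choice of section'' must be read up to this isomorphism.

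For the converse, given a twisted action $(A,\phi)$ on a group $H$ --- a pseudofunctor $A\maps G\op\to\Grp\hookrightarrow\Cat$ with $G$ the one-object groupoid --- I would set $E:=\inta A$. Every morphism $(g,h)$ of $\inta A$ has $g$ invertible in $G$ and $h$ invertible in $H$, hence is an isomorphism by \cref{lem:invert}; since $\inta A$ has one object it is a group, $p:=P_A\maps E\to G$ is a fibration and so a surjection, and $\ker p\cong H$. Taking the section $s(g):=(g,e_H)$ (the cleaving used throughout, with trivial unitor), I would compute $s(g)\inv k s(g)$ and $s(\tau\sigma)\inv s(\tau)s(\sigma)$ from the composition rule \cref{eq:comp_intM} and check --- using the unit and cocycle relations for $\phi$ --- that they return the original $A$ and $\phi$. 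More conceptually, the last two assertions together are the one-object case of \cref{thm:Grothendieck}: the pseudofunctors into $\Grp$ form a full sub-$2$-category of $\ICat(G)$ which corresponds under the Grothendieck construction to the fibrations over $G$ whose total category is a group, i.e.\ the surjections onto $G$, and ``reading a twisted action off a surjection via a section'' is precisely ``recovering an indexed category from its fibration via a cleaving''.
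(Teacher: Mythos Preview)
Your proposal is correct, but it takes a substantially more hands-on route than the paper does. The paper's proof is a single sentence: it observes that the proposition is a direct corollary of \cref{thm:Grothendieck}, the 2-equivalence $\ICat(\X)\simeq\Fib(\X)$ specialized to $\X=G$. The preceding discussion in \S3.2 has already set up $A$, $\phi_0$, and $\phi_{\sigma,\tau}$ as precisely the data one extracts from a cleaving of a fibration, so the independence-of-section claim and the converse are exactly the essential surjectivity and well-definedness of the inverse in \cref{thm:Grothendieck}, restricted to one-object groupoids.

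You do recognize this conceptual shortcut in your final paragraph, which is exactly the paper's argument. The explicit verifications you outline beforehand---checking the nonabelian cocycle identity by telescoping, identifying the change-of-section with a pseudonatural isomorphism via the coboundary $c$, and computing in $\inta A$ from \cref{eq:comp_intM}---are all correct and would make the proof self-contained, at the cost of length. One small point worth flagging: your reading of ``independent of the choice of section'' as ``up to pseudonatural isomorphism'' is the right one, and indeed is what \cref{thm:Grothendieck} delivers; literal equality would be too strong, as you note.
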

This is a direct corollary of \cref{thm:Grothendieck}.

\section{Fibrations of $\FI$-type categories}
\label{sec:GCFI}

In \cite{FItypeCats}, Gadish provides some purely categorical conditions on a category which are sufficient to prove some generalized representation stability results. Categories with these properties, which we recall below, are referred to as \emph{$\FI$-type categories}. In this section, we state sufficient conditions on an indexed category $\M \maps \X\op \to \Cat$ with base category of $\FI$-type for $\inta\M$ to also be of $\FI$-type. We then provide several examples of $\FI$-type categories, including those built using the Grothendieck construction.

\begin{defn}
\label{def:weakpushout}
    A \define{weak pushout} diagram is a pullback square 
    \[
    \begin{tikzcd}
        p
        \arrow[r, "\overline f_1"]
        \arrow[d, swap, "\overline f_2"]
        \arrow[dr, phantom, "\lrcorner", pos = 0.1]
        &
        c_1
        \arrow[d, "f_1"]
        \\
        c_2
        \arrow[r, swap, "f_2"]
        &
        d
    \end{tikzcd}
    \]
    which is universal among pullback squares that agree on the top and left maps: for any pullback square 
    \[
    \begin{tikzcd}
        p
        \arrow[r, "\overline f_1"]
        \arrow[d, swap, "\overline f_2"]
        \arrow[dr, phantom, "\lrcorner", pos = 0.1]
        &
        c_1
        \arrow[d, "h_1"]
        \\
        c_2
        \arrow[r, swap, "h_2"]
        &
        z
    \end{tikzcd}\]
    there exists a unique morphism $h \maps d \to z$ that makes the diagram
    \[
    \begin{tikzcd}
        p
        \arrow[r, "\overline f_1"]
        \arrow[d, swap, "\overline f_2"]
        &
        c_1
        \arrow[d, "f_1"]
        \arrow[ddr, bend left, "h_1"]
        \\
        c_2
        \arrow[r, swap, "f_2"]
        \arrow[drr, swap, bend right, "h_2"]
        &
        d
        \arrow[dr, dashed, "h"]
        \\&&z
    \end{tikzcd}
    \]
     commute. We call $d$ the \define{weak pushout object} and denote it by $c_2 \sqcup_p c_2$. The unique map $h$ induced from a pair of maps $h_i \maps c_i \to z$ is denoted by $h_1 \sqcup_p h_2$. A functor $F \maps \C \to \D$ is said to \define{preserve weak pushouts} if the image of a weak pushout square is also a weak pushout square.
\end{defn}

Note that a pushout is automatically a weak pushout. This restricted notion of pushout also appears in \cite{Devissage}, in the context of algebraic $K$-theory.

\begin{defn}[\cite{FItypeCats}]
\label{def:FItype}
    A category $\X$ is of \define{$\FI$-type} if it satisfies the following conditions.
    \begin{enumerate}
        \item $\X$ is locally finite, i.e.\ all hom-sets are finite
        \item every morphisms is a monomorphism
        \item every endomorphism is an isomorphism
        \item for every pair of objects $x$ and $y$, the group of automorphisms $\Aut_\X(y)$ acts transitively on the set $\Hom_\X(x, y)$
        \item for every object $y$ there exist only finitely many isomorphism classes of objects $x$ for which $\Hom_\X(x, y) \neq \emptyset$ (we denote this by $x \leq y$)
        \item $\X$ has pullbacks
        \item $\X$ has weak pushouts
    \end{enumerate}
\end{defn}

\begin{expl}
    Every locally finite groupoid is $\FI$-type. This turns out not to be a terribly interesting fact on its own. For instance, Gadish's results about $\FI$-type categories when applied to $\FB$ essentially says that a subrepresentation of a finite dimensional representation of a symmetric group is also finite dimensional. 
\end{expl}

\begin{thm}[\cite{FItypeCats}, Thm.\ C]
    If $\C$ is a category of $\FI$-type, then the category of $\C$-modules over $\CC$ is Noetherian. That is, every submodule of a finitely generated $\C$-module is itself finitely generated. 
\end{thm}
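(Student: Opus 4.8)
The plan is to run the Gröbner-basis strategy for representations of combinatorial categories (in the spirit of Sam--Snowden's ``Gröbner methods'', which is also the route taken in \cite{FItypeCats}): reduce Noetherianity to a combinatorial finiteness statement about a poset attached to $\C$, and then close with a Buchberger-type reduction argument. First I would reduce to the representable modules. A $\C$-module over $\CC$ is a functor $\C \to \Vect_\CC$, and a finitely generated one is a quotient of a finite direct sum $\bigoplus_{i=1}^n M_{x_i}$, where $M_x := \CC[\Hom_\C(x,-)]$ is the representable module generated in degree $x$ (by Yoneda, $\Hom(M_x, V) \cong V(x)$, so any finite generating set yields such a surjection). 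Since Noetherian modules are closed under submodules, quotients, and extensions, it suffices to prove that each $M_x$ is a Noetherian module.

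Next I would set up the monomial combinatorics. Fix an object $x$ and let $\mathcal{M}_x = \coprod_{y} \Hom_\C(x,y)$, preordered by post-composition ($f \preceq g$ iff $g = h\circ f$ for some $h$). Axioms (2) and (3) (every morphism a monomorphism, every endomorphism an isomorphism) force this preorder to descend to a genuine partial order on isomorphism classes of morphisms, while axiom (4) lets one replace each $\CC[\Hom_\C(x,y)]$ by the induced $\CC[\Aut_\C(y)]$-module on the relevant coset space, so that a submodule $N \subseteq M_x$ is determined, degree by degree, by the \emph{leading monomials} of its elements. Axioms (1) and (5) make each degree finite and let one well-order $\mathcal{M}_x$ compatibly with a $\ZZ_{\geq 0}$-valued size function on objects; call this the monomial order. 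Axioms (6) and (7) supply, respectively, greatest-common-divisor (pullback) and least-common-multiple (weak pushout) monomials, which is exactly the lattice-like structure needed to run the abstract $S$-pair / division argument.

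The hard part is the finiteness input: showing that $(\mathcal{M}_x, \preceq)$ is a \emph{well-partial-order}, i.e.\ has neither an infinite strictly descending chain nor an infinite antichain. The descending chain condition is immediate from size considerations via (1) and (5); ruling out infinite antichains is the real content. I would prove it by well-founded induction on objects along the relation $\leq$ of (5) --- well-founded precisely because of (5) --- using a minimal-bad-sequence (Nash-Williams) argument in which weak pushouts are used to collapse a putative antichain onto a strictly smaller object, the finiteness conditions providing the base case. Granting the well-partial-order property, the endgame is routine: the leading monomials of a submodule $N \subseteq M_x$ generate a monomial submodule that is finitely generated by Dickson's lemma for well-partial-orders; lifting a finite set of those leading monomials to genuine elements of $N$ (possible because $\CC$ is a field, so leading coefficients are invertible) and running the standard division/reduction shows these elements generate $N$. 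Hence $M_x$, and therefore every finitely generated $\C$-module, is Noetherian.

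As an alternative packaging one could exhibit, for each $\FI$-type category $\C$, an essentially surjective functor to $\C$ from a manifestly Gröbner category (e.g.\ a finite product of copies of $\FI$) satisfying Sam--Snowden's property (F), and invoke that quasi-Gröbner categories have Noetherian module categories. However, verifying property (F) appears to require essentially the same well-partial-order analysis of $\mathcal{M}_x$, so I would not expect this route to be genuinely easier; the well-partial-order property of the monomial poset is, either way, the crux of the argument.
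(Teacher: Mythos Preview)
The paper does not prove this theorem at all: it is stated with a citation to Gadish \cite{FItypeCats} and used as background motivation for why $\FI$-type categories are worth constructing. There is no ``paper's own proof'' to compare against; the result is imported wholesale.

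That said, your sketch is a reasonable outline of the Sam--Snowden/Gadish strategy, but the passage you flag as ``the hard part'' really is underargued. Your claim that an infinite antichain in $\mathcal{M}_x$ can be ``collapsed onto a strictly smaller object'' via weak pushouts is where all the content lives, and the one-sentence description does not make clear what invariant decreases or why the induction terminates. In Gadish's actual argument the weak pushout axiom is used to control how representable modules decompose after induction along a morphism (this is where the ``pullback is also a weak pushout'' compatibility matters), and the finiteness comes from a careful analysis of how $\Aut$-orbits on $\Hom$-sets interact with that decomposition, not from a generic Nash--Williams minimal-bad-sequence. Your alternative route through a quasi-Gr\"obner presentation is also more optimistic than warranted: there is no reason an arbitrary $\FI$-type category admits an essentially surjective, property-(F) functor from a product of copies of $\FI$, and producing one would be a theorem in its own right. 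So while the high-level shape (reduce to representables, set up a monomial order, prove a well-partial-order statement, run Buchberger) is correct, the sketch as written does not yet contain the key idea that closes the argument.
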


We require a technical definition before we can state the main theorem.

\begin{defn}
    We say that an indexed category $\M \maps \X\op \to \Cat$ is \define{weakly reversible} if for each map $f \maps x \to y$ in $\X$, there is a weak pushout preserving functor $f_! \maps \M x \to \M y$ such that $f_!f^*$ is identity on objects, along with a natural transformation $\eta^f \maps id_{Fx} \To f^*f_!$.
\end{defn}

\begin{thm}
\label{thm:main}
    Let $\X$ be an $\FI$-type category, and let $\M \maps \X\op \to \Cat$ be an indexed category. Then $\inta\M$ is an $\FI$-type category and the related fibration preserves pullbacks and weak pushouts if
    \begin{enumerate}
        \item the fibers are $\FI$ type categories
        \item for every endomorphism $f \maps x \to x$ and object $a$ in the fiber over $x$, every map $a \to \M f(a)$ in $\M x$ is invertible
        \item the inclusions $\M x \hookrightarrow \inta \M$ preserve pullbacks
        \item $\M$ is weakly reversible
    \end{enumerate} 
    for all objects $x,y$ and morphisms $f \maps x \to y$ in $\X$. 
\end{thm}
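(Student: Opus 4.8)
The strategy is to verify the seven defining conditions of \cref{def:FItype} for $\inta\M$ one at a time, using the hypotheses on $\X$ and on $\M$ together with the explicit description of $\inta\M$ in terms of the hom-formula \cref{eq:hom} and the composition rule \cref{eq:comp_intM}. The verifications of conditions (1)--(5) are essentially bookkeeping with the Grothendieck construction; conditions (6) and (7), the existence of pullbacks and weak pushouts in $\inta\M$ together with their preservation by $P_\M$, are the substance, and hypotheses (3) and (4) are tailored to those two.

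First I would dispose of the finiteness and invertibility conditions. Condition (1): by \cref{eq:hom} the hom-set $\inta\M((x,a),(y,b))$ is a finite union (over the finite set $\X(x,y)$, using that $\X$ is locally finite) of hom-sets $\M x(a, \M f b)$, each finite since the fibers are $\FI$-type and hence locally finite; so $\inta\M$ is locally finite. Condition (3): an endomorphism $(f,k)\maps(x,a)\to(x,a)$ has $f$ an endomorphism of $x$, hence an isomorphism since $\X$ is $\FI$-type, and $k\maps a\to\M f(a)$ invertible by hypothesis (2); then \cref{lem:invert} shows $(f,k)$ is an isomorphism. Condition (2): if $(f,k)$ is split by precomposition or more precisely if $(g,\ell)(f,k)=\mathrm{id}$, reading off the first coordinate gives $gf=\mathrm{id}_x$, so $f$ is a split mono hence an iso in $\X$ (it is an endomorphism-candidate after the section argument, or directly: $\X$'s morphisms are monos and $f$ has a left inverse, and one checks it is then iso using transitivity/EI structure); once $f$ is iso the second coordinate forces $k$ to have a left inverse in $\M x$, and $\M x$ being $\FI$-type makes $k$ an iso, so $(f,k)$ is iso — every mono being trickier, one argues a mono $(f,k)$ has $f$ mono in $\X$ (automatic) and $k$ mono in $\M x$, both automatic, but monomorphism in $\inta\M$ must be deduced from the cancellation property coordinatewise using \cref{eq:comp_intM}; I would check that $(f,k)\circ(h_1,j_1)=(f,k)\circ(h_2,j_2)$ forces $h_1=h_2$ (as $f$ is mono) and then $j_1=j_2$ (as $k$ is mono, after cancelling the now-equal reindexed terms). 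Condition (5): if $\Hom_{\inta\M}((x,a),(y,b))\neq\emptyset$ there is some $f\maps x\to y$, so $x\leq y$ in $\X$, leaving finitely many iso-classes of such $x$; over each such $x$, the candidates $a$ must admit a map $a\to\M f(b)$ in the $\FI$-type category $\M x$, and $\FI$-type condition (5) in $\M x$ bounds these to finitely many iso-classes — and \cref{lem:invert} guarantees iso-classes in $\inta\M$ are detected coordinatewise (after choosing $\M x$ skeletal), so finitely many in total. Condition (4), transitivity: given $f_1,f_2\maps x\to y$ with lifts $k_i\maps a\to\M f_i(b)$, transitivity in $\X$ gives $\sigma\in\Aut_\X(y)$ with $\sigma f_1=f_2$; I would then produce an automorphism of $(y,b)$ of the form $(\sigma,\tau)$ carrying $(f_1,k_1)$ to $(f_2,k_2)$, where $\tau$ is first found sending $b$ into the right place using transitivity in the fiber $\M y$ applied to the two maps $k_1$ and a reindexed version of $k_2$; this step requires massaging the compositor $\mu$ and is the fussiest of the "easy" conditions.

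The core is conditions (6) and (7). For pullbacks: given a cospan $(f_1,k_1)\maps(x_1,a_1)\to(y,b)\gets(x_2,a_2)\maps(f_2,k_2)$, I would first form the pullback $p$ of $f_1,f_2$ in $\X$ (exists, $\X$ is $\FI$-type), with legs $\overline f_i$. Over $p$ I must build the fiber component. Pull the objects $a_i$ and the maps $k_i$ back along $\overline f_i$ into $\M p$ using the reindexing functors $\overline f_i^*=\M\overline f_i$; the compositor isomorphisms $\mu$ identify $\M\overline f_1(\M f_1(b))\cong\M(f_1\overline f_1)(b)=\M(f_2\overline f_2)(b)\cong\M\overline f_2(\M f_2(b))$, so I get a cospan in $\M p$, namely $\M\overline f_1(a_1)\to (\text{common object})\gets\M\overline f_2(a_2)$, and I take \emph{its} pullback $a$ in $\M p$ (exists, $\M p$ is $\FI$-type). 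The claim is that $((p,a),(\overline f_1,\ldots),(\overline f_2,\ldots))$ is the pullback in $\inta\M$; the universal property unwinds, via \cref{eq:hom}, into the universal property of the pullback in $\X$ on first coordinates and — after pulling everything back along the comparison map into the common base — the universal property of the pullback in $\M p$ on second coordinates, where hypothesis (3) (the inclusions $\M x\hookrightarrow\inta\M$ preserve pullbacks) is what lets me transport the fiberwise pullback to an honest pullback in $\inta\M$ and recognize cones. That $P_\M$ preserves this pullback is immediate since $P_\M(p,a)=p$ and the first coordinates were chosen to be the pullback in $\X$.

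For weak pushouts, hypothesis (4), weak reversibility, is the crucial tool: given a weak-pushout span in $\inta\M$, I would apply $P_\M$ to get a pullback span in $\X$ (since a weak pushout is built on a pullback square) and form its weak pushout $d$ in $\X$ with maps $g_i\maps x_i\to d$. The functors $(g_i)_!\maps\M x_i\to\M d$ supplied by weak reversibility push the fiber data forward to $\M d$; using the natural transformations $\eta^{g_i}$ and the identity-on-objects property $g_i{}_! g_i^*=\mathrm{id}$, together with the fact that $(g_i)_!$ preserves weak pushouts, I would form in the $\FI$-type category $\M d$ the weak pushout of the pushed-forward span and take that as the fiber component of the weak pushout object in $\inta\M$. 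Checking the weak-pushout universal property in $\inta\M$ then splits, as before, into the universal property in $\X$ on first coordinates (by construction) and a universal property in $\M d$ on second coordinates, where weak reversibility converts maps out of the apex into maps between the relevant fibers; one verifies the required square commutes by a diagram chase using the compositor coherence and the triangle-type identity relating $\eta^f$, $f_!$, $f^*$. That $P_\M$ preserves weak pushouts is again built in. \textbf{The main obstacle} I anticipate is precisely the weak-pushout step: unlike pullbacks, weak pushouts are not limits, the Grothendieck fibration does not interact with them formally, and one must hand-craft the comparison maps using $f_!$ and $\eta^f$ while keeping the pseudofunctor compositors $\mu$ coherent throughout — verifying that the candidate map $h_1\sqcup_p h_2$ in $\inta\M$ is well-defined and unique is where the bulk of the genuine work lies, and it is the reason hypothesis (4) is stated in exactly the form it is.
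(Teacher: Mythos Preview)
Your overall strategy---verify the seven conditions of \cref{def:FItype} one at a time, with the first five being bookkeeping and the last two (pullbacks, weak pushouts) carrying the weight via hypotheses (3) and (4)---is exactly the paper's approach, and your treatment of local finiteness, the EI condition, increasing, pullbacks, and weak pushouts matches the paper's arguments in \cref{sec:proofs} essentially step for step.

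Two places deserve cleanup. First, your monomorphism paragraph opens with a detour through split monics and ``$f$ has a left inverse, and one checks it is then iso using transitivity/EI structure''; none of that is relevant or correct for showing \emph{every} map of $\inta\M$ is a monomorphism. The right argument is the one you state at the end of that paragraph: cancel in the first coordinate using that $f$ is mono in $\X$, then cancel in the second coordinate using that the fiber maps are mono. Drop the split-mono discussion entirely.

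Second, and more substantively, your transitivity sketch is too optimistic. You propose to produce the automorphism $(\sigma,\tau)$ of $(y,b)$ by invoking ``transitivity in the fiber $\M y$ applied to the two maps $k_1$ and a reindexed version of $k_2$''. But $k_1$ and $k_2$ live in $\M x$, not $\M y$, and what is actually needed (see the paper's \cref{lem:transitive}) is an element $\ell\maps b\to \M g(b)$ in $\M y$ making a specific compositor square commute---fiber transitivity alone does not hand you this. The paper isolates this as a separate necessary-and-sufficient condition rather than deriving it from the stated hypotheses; you should either do the same or explain carefully how to manufacture $\ell$ from the data you have. As written, this is the one step in your plan that does not go through on inspection.
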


This theorem is proven in the next section.

\begin{expl}[Finite products of $\FI$-type categories]
    For categories $\X$ and $\Y$, recall the constant indexed category $\Delta \Y \maps \X\op \to \Cat$ from \cref{ex:products}. If $\X$ and $\Y$ are of $\FI$-type, then $\Delta \Y$ clearly satisfies the conditions of \cref{thm:main}. Thus products of $\FI$-type categories are also of $\FI$-type.
\end{expl}

\begin{expl}[$FI_G$]
    Let $G$ be a group, and $G^\bullet \maps \FI\op \to \Grp$ be the indexed group defined as follows. For an object $n \in \FI$, $n \mapsto G^n$. A morphism $f \colon n \to m$ is mapped to the pullback $f^\ast \colon G^m \to G^n$. The symmetric group $S_n$ acts on $G^n$ by permuting the copies of $G$. The Grothendieck construction gives a category $\inta G^\bullet$ with objects $(n, \star_n)$ with $n \in\FI$ and $\star_n$ the unique object in $G^n$, and morphisms $(f,g) \maps (n, \star_n) \to (m, \star_m)$ with $f \maps n \to m$ a morphism in $\FI$, and $g$ an element of the group $G^n$.
    The category $\inta G^\bullet$ is equivalent to $\FI_G$.
    The functor $\FI_G \to \FI$ which simply forgets the $G$-decorations is a fibration.
\end{expl}

\begin{prop}[$FI_G$]
    Let $G$ and $H$ be finite groupoids. Then $\FI_{G \sqcup H} \simeq \FI_G \times \FI_H$.
\end{prop}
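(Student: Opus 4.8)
The plan is to exhibit mutually inverse equivalences, working from the description of $\FI_G$ as the Grothendieck construction $\inta G^{\times\bullet}$ of the strict indexed category $G^{\times\bullet}\maps\FI\op\to\Cat$ sending $n$ to the $n$-fold product groupoid $G^{\times n}$ and an injection $f\maps n\to m$ to the reindexing functor $f^\ast\maps G^{\times m}\to G^{\times n}$, $(x_j)_j\mapsto(x_{f(i)})_i$. Concretely, then, an object of $\FI_G$ is a finite set $S$ equipped with a coloring $c\maps S\to\ob G$, and a morphism $(S,c)\to(T,c')$ is an injection $\phi\maps S\hookrightarrow T$ together with a morphism $c(s)\to c'(\phi(s))$ in $G$ for every $s\in S$; composition is injection-composition together with $G$-composition in each coordinate, and there is no compositor to track since $G^{\times\bullet}$ is strict.

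The one substantive observation is that, $G\sqcup H$ being a \emph{coproduct} of groupoids, we have $\ob(G\sqcup H)=\ob G\sqcup\ob H$ and there are no morphisms in $G\sqcup H$ between an object of $G$ and an object of $H$. Hence a coloring $c\maps S\to\ob(G\sqcup H)$ is exactly a partition $S=S_G\sqcup S_H$ (the preimages of $\ob G$ and $\ob H$) together with colorings $S_G\to\ob G$ and $S_H\to\ob H$, and any morphism $(\phi,\gamma)$ of $\FI_{G\sqcup H}$ necessarily sends $S_G$ into $S'_G$ and $S_H$ into $S'_H$, its labels over $S_G$ lying in $G$ and those over $S_H$ in $H$. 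I would isolate this as a lemma; the remainder is bookkeeping. Define $\Phi\maps\FI_{G\sqcup H}\to\FI_G\times\FI_H$ on objects by $(S,c)\mapsto\bigl((S_G,c|_{S_G}),(S_H,c|_{S_H})\bigr)$ and on a morphism $(\phi,\gamma)$ by restricting $\phi$ and $\gamma$ along the two parts of the partition; this is well defined by the lemma and functorial because composition in $\FI_{G\sqcup H}$ is computed one color-class at a time. Conversely define $\Psi\maps\FI_G\times\FI_H\to\FI_{G\sqcup H}$ by $\bigl((A,a),(B,b)\bigr)\mapsto(A\sqcup B,\,a\sqcup b)$ and a pair of morphisms to their disjoint union. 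The composites $\Psi\Phi$ and $\Phi\Psi$ are then naturally isomorphic to the respective identities via the canonical bijections $S_G\sqcup S_H\xrightarrow{\sim}S$ and $A\xrightarrow{\sim}(A\sqcup B)_G$, $B\xrightarrow{\sim}(A\sqcup B)_H$; these fail to be identities only because the coproduct of sets involves a choice of tags, which is precisely why one obtains an equivalence rather than an isomorphism on the nose.

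There is essentially no obstacle: the only step needing care is the decomposition-of-morphisms lemma, and it uses nothing beyond the disjointness built into the groupoid coproduct $G\sqcup H$ (equivalently, the emptiness of the relevant hom-sets). For a more conceptual phrasing one can instead observe that $\FI_G\times\FI_H\to\FI$, $\bigl((A,a),(B,b)\bigr)\mapsto A\sqcup B$, is a fibration whose fibre over $n$ is $\coprod_{n=A\sqcup B}G^{\times A}\times H^{\times B}\cong(G\sqcup H)^{\times n}$ with reindexing matching $(G\sqcup H)^{\times\bullet}$, so \cref{thm:Grothendieck} identifies it, up to equivalence over $\FI$, with $\inta(G\sqcup H)^{\times\bullet}=\FI_{G\sqcup H}$.
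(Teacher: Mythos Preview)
Your proof is correct. The paper takes the same underlying route---it defines what you call $\Psi$, sending $(m,n)\mapsto m+n$ and concatenating the decorated injections---but records only that this functor is ``clearly essentially surjective'' and stops there; full faithfulness (equivalently, your decomposition lemma that a morphism in $\FI_{G\sqcup H}$ cannot cross between the $G$- and $H$-colored parts) is left entirely to the reader. Your write-up supplies exactly what the paper omits: the inverse $\Phi$, the explicit partition argument using the absence of morphisms between $\ob G$ and $\ob H$ in the coproduct groupoid, and the round-trip natural isomorphisms. The closing fibration argument, identifying both sides over $\FI$ with $\inta(G\sqcup H)^{\times\bullet}$ via \cref{thm:Grothendieck}, is an alternative not present in the paper and is arguably the cleaner conceptual explanation.
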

\begin{proof}
    Define a functor $\FI_G \times \FI_H \to \FI_{G \sqcup H}$ by sending an object $(m,n)$ to $m+n$, and a morphism $((f, g_1, \dots, g_m), (j, h_1, \dots h_n))$ to $(f+j, g_1, \dots, g_m, h_1, \dots h_n)$. This is clearly essentially surjective. 
\end{proof}

\begin{expl}[Block permutations]
    Let $\M \maps \FI\op \to \Cat$ be given by $n \mapsto \FI^n$ on objects, and for an injection $f \maps n \to p$, define $f^* \maps \FI^p \to \FI^n$ by $f^*(q_1, \dots, q_p) = (q_{f(1)}, \dots, q_{f(n)})$. What is $\inta \M$ like? The objects are tuples $(n, m_1, \dots, m_n)$ of finite sets. This is an $n$-part partition of the set $\sum^n m_i$. A map $(n, m_1, \dots, m_n) \to (p, q_1, \dots, q_p)$ consists of an injection $f \maps n \to p$, and $(g_1, \dots, g_n) \maps (m_1, \dots, m_n) \to f^*(q_1, \dots, q_p)$, which is the same as a family of injections $g_i \maps m_i \to q_{f(i)}$.
    
    We naturally get a fibration $\inta\M \to \FI$ by projecting onto the first component, but we consider now another functor $\inta\M \to \FI$. Define a functor $c \maps \inta\M \to \FI$ by $(n, m_1, \dots, m_n) \mapsto \sum^n m_i$, and similar on morphisms. This is full and essentially surjective, but not faithful, which tells us that it is forgetting purely stuff.
\end{expl}

\section{Proof of \cref{thm:main}}
\label{sec:proofs}

In this section we provide a proof for \cref{thm:main}. We prove the corresponding statement for each property in \cref{def:FItype} separately to aid the reader that is only concerned with a subset of the properties.

\subsection{Locally finite}

\begin{lem}[Locally finite]
\label{lem:locfin}
    Let $\M \maps \X\op \to \Cat$ be an indexed category with $\X$ locally finite. Then $\inta\M$ is locally finite if and only if $\M(x)$ is locally finite for each object $x \in \X$.
\end{lem}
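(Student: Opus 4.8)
The plan is to deduce everything directly from the hom-set formula \eqref{eq:hom}. Unwinding the abbreviated notation there (the $f$ appearing on the right-hand side ranges over $\X(x,y)$), it states that for objects $(x,a)$ and $(y,b)$ of $\inta\M$,
\[
    \inta\M\bigl((x,a),(y,b)\bigr) \;=\; \coprod_{f \in \X(x,y)} \M(x)\bigl(a, \M f(b)\bigr),
\]
so a hom-set of $\inta\M$ is a disjoint union, indexed by the hom-set $\X(x,y)$, of hom-sets in the fibre $\M x$. Since finiteness of sets is preserved by finite disjoint unions, the equivalence should fall out of this identity almost immediately.

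For the ``if'' direction, I would assume each $\M(x)$ is locally finite, fix $(x,a)$ and $(y,b)$, and argue: $\X(x,y)$ is finite because $\X$ is locally finite; for each of its finitely many elements $f$, the set $\M(x)(a,\M f(b))$ is finite by hypothesis; hence the displayed coproduct is a finite union of finite sets and therefore finite.

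For the ``only if'' direction, I would assume $\inta\M$ is locally finite, fix an object $x\in\X$ and objects $a,b\in\M(x)$, and specialise the displayed formula to $f=id_x$: this exhibits $\M(x)(a,\M(id_x)(b))$ as one of the summands of the finite set $\inta\M((x,a),(x,b))$, hence as a finite set. It then remains to identify this with $\M(x)(a,b)$, which is done by post-composing with the unitor component $(\eta_x)_b\maps b\xrightarrow{\ \sim\ }\M(id_x)(b)$, an isomorphism since the unitor of a pseudofunctor is invertible; this yields a bijection $\M(x)(a,b)\cong\M(x)(a,\M(id_x)(b))$, so $\M(x)(a,b)$ is finite.

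There is no genuine obstacle here; the only point requiring a moment's care is the bookkeeping in the second direction, where one should use the unitor isomorphism rather than silently assume $\M(id_x)=id_{\M x}$ (in the strict case, or after normalising the cleaving as discussed in \cref{sec:Grothendieck}, this is an equality and the argument collapses to reading off a literal summand). I would present the proof in essentially the three short steps above.
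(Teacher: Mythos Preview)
Your proposal is correct and follows exactly the approach the paper takes: the paper's entire proof is the sentence ``This follows immediately from \cref{eq:hom},'' and you have simply spelled out that immediate deduction, including the minor bookkeeping with the unitor that the paper leaves implicit.
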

\begin{proof}
    This follows immediately from \cref{eq:hom}.
\end{proof}

\subsection{Monomorphisms}

Nothing interesting is needed for the condition of every map being a monomorphism. If the base has the property, then the total category will have the property if and only if the fibers all have the property. Going from total to fibers follows from the fact that a monomorphism in a category remains a monomorphism in all subcategories. The converse is a direct check.

\begin{lem}[Every map is a monomorphism]
    Let $\X$ be a category where every map is a monomorphism, and $\M \maps \X\op \to \Cat$ be an indexed category. Then every map in $\inta \M$ is a monomorphism if and only if each map in the category $\M x$ is a monomorphism for each $x \in \X$.
\end{lem}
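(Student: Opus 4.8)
The plan is to prove the two implications of the biconditional separately; the standing hypothesis that every arrow of $\X$ is a monomorphism is used only for the passage from the fibres to the total category.

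From the total category to the fibres, I would use the fact recalled in \cref{sec:Grothendieck} that the fibre of $P_\M$ over an object $x$ is a subcategory of $\inta\M$ equivalent to $\M(x)$. An arrow that is monic in $\inta\M$ is still monic in any subcategory containing it, and a faithful functor reflects monomorphisms; so, given an arrow $m$ of $\M(x)$, sending it into $\inta\M$ through one leg of this equivalence yields an arrow that is monic by hypothesis, hence monic in the fibre, hence $m$ is monic in $\M(x)$. No condition on $\X$ enters here.

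From the fibres to the total category, assume every arrow of each $\M(x)$ is monic and let $(f,k)\maps (x,a)\to(y,b)$ be an arrow of $\inta\M$. Suppose $(f,k)\circ(g_1,\ell_1)=(f,k)\circ(g_2,\ell_2)$ for arrows $(g_i,\ell_i)\maps (w,c)\to(x,a)$, $i=1,2$. Expanding both sides with the composition formula \cref{eq:comp_intM} gives $(f\circ g_i,\ \mu_{g_i,f}\circ\M g_i(k)\circ\ell_i)$. Comparing the $\X$-components yields $f\circ g_1=f\circ g_2$, whence $g_1=g_2=:g$ since $f$ is a monomorphism in $\X$ --- the one place where the hypothesis on $\X$ is used. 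Comparing the remaining components, which are arrows of $\M(w)$, gives $\mu_{g,f}\circ\M g(k)\circ\ell_1=\mu_{g,f}\circ\M g(k)\circ\ell_2$; the compositor $\mu_{g,f}$ is invertible, so it cancels, and then $\M g(k)\circ\ell_1=\M g(k)\circ\ell_2$ forces $\ell_1=\ell_2$. Hence $(g_1,\ell_1)=(g_2,\ell_2)$ and $(f,k)$ is monic.

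The step deserving care --- where a too-quick argument would go wrong --- is the last cancellation of $\M g(k)$: one must \emph{not} try to deduce it from the reindexing functor $\M g$ preserving monomorphisms, since functors need not do so. The correct observation is simply that $\M g(k)$ is an arrow of $\M(w)$, and by hypothesis every arrow of $\M(w)$ is monic. Phrased this way the step is immediate, and it in fact invokes only the property for the codomain fibres, although the statement asks for it in all fibres at once for uniformity.
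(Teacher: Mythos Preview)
Your proof is correct and follows essentially the same approach as the paper: for the direction from fibres to total you cancel first in $\X$ and then in the fibre, and for the converse you use that monomorphisms restrict to subcategories. If anything, you are slightly more careful than the paper in keeping track of the compositor $\mu_{g,f}$ in the pseudofunctor case; the paper's proof tacitly works as in the strict case. Your closing parenthetical about ``the codomain fibres'' is a little off, though: the cancellation of $\M g(k)$ takes place in $\M(w)$ for an arbitrary test object $w$, so the hypothesis really is needed in every fibre.
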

\begin{proof}
    Let $\M \maps \X\op \to \Cat$ be such that every map in $\M x$ is a monomorphism. Let $(f,k) \maps (x,a) \to (y,b)$ and $(g_1, \ell_1), (g_2, \ell_2) \maps (z, c) \to (x,a)$ be such that $(f,k) \circ (g_1, \ell_1) = (f,k) \circ (g_2, \ell_2)$. Then we get $f \circ g_1 = f \circ g_2$, and since every map in $\X$ is a monomorphism, we get $g_1 = g_2$. We also get $\M g_1(k) \circ \ell_1 = \M g_2(k) \circ \ell_2 = \M g_1(k) \circ \ell_2$, and since every map in $\M z$ is a monomorphism, we have $\ell_1 = \ell_2$. Thus $(f,k)$ is a monomorphism.
    
    To prove the converse, consider instead a fibration $P \maps \A \to \X$ such that every map in $\A$ or $\X$ is a monomorphism. Let $f \maps a \to b$ and $g_1, g_2 \maps c \to a$ be maps in $\A_x$ such that $f \circ g_1 = f \circ g_2$. Since these are maps in $\A$, $f$ is a monomorphism, and we get $g_1 = g_2$. 
\end{proof}

\subsection{Endomorphisms are invertible}

A category where every endomorphism is an isomorphism is called an \define{EI category}. This condition is equivalent to the condition that every morphism between isomorphic objects is invertible. \cref{prop:EIicat} gives sufficient conditions for the Grothendieck construction of an indexed category $\M \maps \X\op \to \Cat$ with an EI base category to be an EI category. \cref{prop:EIfib} is the converse, though phrased in the language of fibrations. So these conditions are both necessary and sufficient.

\begin{prop}
\label{prop:EIicat}
    Let $\X$ be an EI category, $\M \maps \X\op \to \Cat$ an indexed category such that 
    \begin{itemize}
        \item $\M x$ is EI for all objects $x \in \X$ 
        \item for every endomorphism $f \maps x \to x$ and object $a$ in the fiber over $x$, every map $a \to \M f(a)$ in $\M x$ is invertible
    \end{itemize}
    Then $\inta \M$ is EI.
\end{prop}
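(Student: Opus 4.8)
The plan is to show that any endomorphism $(f,k)\maps (x,a)\to(x,a)$ in $\inta\M$ is an isomorphism by using \cref{lem:invert}, which reduces the problem to showing that both $f\maps x\to x$ in $\X$ and $k\maps a\to\M f(a)$ in $\M x$ are isomorphisms. First I would handle $f$: since $\X$ is an EI category and $f$ is an endomorphism of $x$, $f$ is automatically invertible. This is the easy half and uses only the hypothesis on the base.

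The substance is in showing $k$ is invertible. Here $k\maps a\to \M f(a)$ is \emph{not} literally an endomorphism in $\M x$, so the EI hypothesis on $\M x$ does not apply directly — this is exactly why the second bullet hypothesis is needed, and I expect this to be the main obstacle. The second hypothesis says precisely that any such map $a\to\M f(a)$ arising from an endomorphism $f$ of $x$ is invertible, so in fact $k$ is invertible by fiat. However, to make the argument nontrivial (and to see why the hypothesis is phrased the way it is rather than simply ``$\M x$ is EI''), I would observe that $\M f\maps \M x\to\M x$ need not be an equivalence and need not send $a$ to an isomorphic object in a way visible inside $\M x$; since $f$ is an isomorphism in $\X$, though, $\M f$ is an equivalence of categories (being $\cong(f\inv)^*$ with quasi-inverse $\M(f\inv)$, up to the compositor and unitor isomorphisms), so $a$ and $\M f(a)$ are isomorphic objects of $\M x$. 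One could then try to invoke the EI property of $\M x$ — but a map between isomorphic objects in an EI category is invertible — to conclude. In other words, conditions are somewhat redundant, and I would phrase the proof to use whichever is cleanest: given that $f$ is an iso in $\X$, $\M f$ is an equivalence, so $a\cong\M f(a)$ in $\M x$, and since $\M x$ is EI every map between isomorphic objects is invertible, hence $k$ is invertible. (The second explicit hypothesis then guarantees this even without needing to track the equivalence carefully, and also covers the case where one wants to be agnostic about compositor coherence.)

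Having shown both $f$ and $k$ are isomorphisms, \cref{lem:invert} gives that $(f,k)$ is an isomorphism in $\inta\M$, which is what we wanted. I would write the proof in essentially three short steps: (1) reduce to invertibility of the two components via \cref{lem:invert}; (2) invertibility of $f$ from the base being EI; (3) invertibility of $k$, either directly from the second hypothesis or via the observation that $\M f$ is an equivalence (so $a\cong\M f(a)$) combined with $\M x$ being EI. The only point requiring genuine care is step (3): one must be careful that ``EI'' is being used in the form ``every morphism between isomorphic objects is an isomorphism'' (noted in the surrounding text), and that $a$ and $\M f(a)$ are genuinely isomorphic in $\M x$ — this is where the isomorphism-ness of $f$ in $\X$ is essential and where a reader might otherwise expect a gap.
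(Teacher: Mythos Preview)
Your direct route---invoke the second hypothesis to conclude $k$ is invertible, then apply \cref{lem:invert}---is exactly what the paper does and is correct.

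Your preferred alternative, however, has a genuine gap. You assert that since $f$ is an isomorphism, $\M f$ is an equivalence, ``so $a\cong\M f(a)$ in $\M x$.'' That inference fails: an autoequivalence $F\maps\C\to\C$ need not satisfy $c\cong Fc$. The paper makes exactly this point in the remark following the proposition, taking $\X$ to be the one-object groupoid with automorphism group $\ZZ$ acting by translation on the discrete category with object set $\ZZ$; there $\M f$ is an isomorphism of categories for every $f$, yet $a\not\cong\M f(a)$ whenever $f\neq 0$. Your claim that the hypotheses are ``somewhat redundant'' is therefore also wrong: replace the discrete $\ZZ$ by the poset $(\ZZ,\leq)$ with the same translation action, and you get EI fibers over an EI base in which the second hypothesis fails and $\inta\M$ is not EI (the endomorphism $(1,\,0\leq 1)$ of $(\star,0)$ has no inverse). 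The second hypothesis is doing essential work and cannot be derived from the first.
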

\begin{proof}
    Assume that $\M x$ is an EI category for each object $x \in \X$ and for an endomorphism $f \maps x \to x$ every map $a \to \M f(a)$ in $\M x$ is invertible. Let $(f,k) \maps (x,a) \to (x,a)$ be an endomorphism in $\inta\M$. Then $f \maps x \to x$ is an endomorphism, and thus invertible, and $k \maps a \to \M f(a)$ must be invertible by assumption. Since $f$ and $k$ are both invertible, $(f,k)$ is invertible.
\end{proof}

It is important to notice here that we are not asking for $a$ to be isomorphic to $\M f(a)$. It is possible that there are no maps of the form $a \to \M f(a)$, in which case, the above condition would be vacuously true. Take for example $X$ to be the one-object groupoid whose automorphism group of the unique object is $\ZZ$, and take $\M$ to assign to the unique object the discrete category with object set $\ZZ$, and the action is given by translation. The category $\inta\M$ is a groupoid, and thus it is EI, but usually we do not have $a \cong \M f(a)$ or indeed any maps between them in either direction.

\begin{lem}
\label{lem:invertinfiber}
    Let $\A$ and $\X$ be EI categories, and $P \maps \A \to \X$ be a fibration. Let $\phi \maps a \to b$ be a map in $\A_x$ which has an inverse $\phi\inv$ in $\A$. Then $\phi\inv$ is also in the fiber.
\end{lem}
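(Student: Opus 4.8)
The plan is to prove this directly from the functoriality of $P$; it turns out that neither the cartesian lifting property of the fibration nor the EI hypotheses are actually needed for this particular step, so I would keep the argument to pure bookkeeping and flag the EI context only as the reason the lemma is worth stating.

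First I would unwind the hypothesis. To say that $\phi \maps a \to b$ lies in $\A_x$ means, by the definition of the fibre, that $P(a) = P(b) = x$ and $P(\phi) = 1_x$. The map $\phi\inv \maps b \to a$ is a bona fide morphism of $\A$ by assumption, so the only thing left to check is that it lies over the identity, i.e.\ that $P(\phi\inv) = 1_x$. Once that is known, $\phi\inv$ is a morphism of $\A$ between objects of $\A_x$ lying over $1_x$, hence a morphism of $\A_x$, and since composition in $\A_x$ is inherited from $\A$ it is still a two-sided inverse of $\phi$ in the fibre.

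Then I would apply $P$ to the relation $\phi\inv \circ \phi = 1_a$: functoriality gives $P(\phi\inv) \circ P(\phi) = P(1_a)$, that is $P(\phi\inv) \circ 1_x = 1_x$, so $P(\phi\inv) = 1_x$. (Equivalently: $P(\phi\inv)$ is a two-sided inverse of $P(\phi) = 1_x$ in $\X$, and inverses are unique.) This finishes the proof.

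The only "obstacle" is to resist overcomplicating a one-line argument. For orientation I would note that this lemma supplies the fibrewise half of the converse to \cref{prop:EIicat}: if $\A$ and $\X$ are EI and $\phi$ is an endomorphism of some $a \in \A_x$, then $\phi$ is invertible in $\A$ because $\A$ is EI, and the lemma then places $\phi\inv$ in $\A_x$, so every endomorphism of $\A_x$ is invertible and $\A_x$ is itself EI.
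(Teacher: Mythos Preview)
Your proof is correct and is essentially the same as the paper's: the paper records the one-line computation $P(\phi\inv) = P(\phi)\inv = id_x\inv = id_x$, which is exactly your functoriality argument. Your observation that neither the fibration structure nor the EI hypotheses are used is accurate.
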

\begin{proof}
    $P(\phi\inv) = P(\phi)\inv = id_x\inv = id_x$.
\end{proof}

\begin{lem}
\label{lem:liftinvmono}
    A cartesian lift of an invertible map is always a monomorphism.
\end{lem}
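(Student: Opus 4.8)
The plan is a short diagram chase straight from the universal property defining cartesian morphisms; I do not expect any serious obstacle. Fix a fibration $P \maps \A \to \X$ and let $\phi \maps a \to b$ be a cartesian lift of an invertible morphism $f \maps x \to y$, so $P\phi = f$ and $\phi$ is $P$-cartesian. To show $\phi$ is a monomorphism, I would start from an arbitrary pair $\psi_1, \psi_2 \maps c \to a$ in $\A$ with $\phi \circ \psi_1 = \phi \circ \psi_2$, write $\theta \maps c \to b$ for this common composite, and aim to conclude $\psi_1 = \psi_2$.

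First I would push the equation down along $P$: applying $P$ to $\phi \circ \psi_1 = \phi \circ \psi_2$ gives $f \circ P\psi_1 = f \circ P\psi_2$, and since $f$ is invertible it is in particular a monomorphism in $\X$, so $P\psi_1 = P\psi_2$. Call this common morphism $g$, so that $P\theta = f \circ g$. Now both $\psi_1$ and $\psi_2$ are arrows of the kind produced by the cartesian property of $\phi$ applied to the pair $(g,\theta)$: for $i = 1,2$ we have $P\psi_i = g$ and $\phi \circ \psi_i = \theta$. The uniqueness clause in the definition of cartesian morphism then forces $\psi_1 = \psi_2$, which is exactly the statement that $\phi$ is a monomorphism.

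The only subtlety worth flagging in the write-up is that invertibility of $f$ is used solely through the weaker fact that $f$ is a monomorphism; so the lemma holds, with the same proof, for cartesian lifts of arbitrary monomorphisms. It is also true that $\phi$ itself is invertible when $f$ is, but that is not needed for this argument and I would not invoke it.
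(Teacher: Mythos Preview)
Your proof is correct and follows essentially the same route as the paper's: both apply $P$ to the equation $\phi\psi_1 = \phi\psi_2$, use invertibility of $f$ to deduce $P\psi_1 = P\psi_2$, and then invoke the uniqueness clause of the cartesian property. The only cosmetic difference is that the paper writes out the chain $P\psi_1 = f^{-1}\circ f\circ P\psi_1 = \dots = P\psi_2$ explicitly, whereas you simply note that invertible implies monic; your added remark that the argument only needs $f$ to be a monomorphism is a correct strengthening not stated in the paper.
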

\begin{proof}
    Let $P \maps \A \to \X$ be a fibration, $f \maps x \to y$ be an isomorphism in $\X$, and $\phi \maps a \to b$ a cartesian lift of $f$. Consider two maps $\psi_1, \psi_2 \maps a' \to a$ such that $\phi \circ \psi_1 = \phi \circ \psi_2$. Then we have the following setup.
    \[
    \begin{tikzcd}[column sep = huge]
        a'
        \arrow[drr, "\phi \circ \psi_1"]
        \arrow[dr, dashed, swap, "\exists!\gamma"]
        \arrow[dd, dotted, bend right]
        \\&
        a
        \arrow[r, swap, "\phi"]
        \arrow[dd, dotted, bend right]
        &
        b
        \arrow[dd, dotted, bend right]
        &
        \A
        \arrow[dd, "P"]
        \\
        x'
        \arrow[drr, "f \circ P(\psi_1)"]
        \arrow[dr, swap, "P(\psi_1)"]
        \\&
        x
        \arrow[r, swap, "f"]
        &
        y
        &
        \X
    \end{tikzcd}\]
    By the definition of $\phi$ being cartesian, there is a unique map $\gamma \maps a' \to a$ in $\A$ such that $\phi \circ \gamma = \phi \circ \psi_1$ and $P(\gamma) = P(\psi_1)$. Clearly $\psi_1$ satisfies these conditions as well, so $\gamma = \psi_1$. Note that 
    \begin{align*}
        P(\psi_1) 
        &= f\inv \circ f \circ P(\psi_1) 
        \\&= f\inv \circ P(\phi) \circ P(\psi_1) 
        \\&= f\inv \circ P(\phi \circ \psi_1)
        \\&= f\inv \circ P(\phi \circ \psi_2)
        \\&= f\inv \circ P(\phi) \circ P(\psi_2)
        \\&= f\inv \circ f \circ P(\psi_2)
        = P(\psi_2).
    \end{align*}
    Thus $\psi_1=\psi_2$.
\end{proof}

\begin{prop}
\label{prop:EIfib}
    Let $P\maps \A \to \X$ be a fibration with $\A$ and $\X$ being EI categories. Then for each object $x \in \X$, the fiber $\A_x$ is an EI category, and for an endomorphism $f \maps x \to x$ in $\X$ and an object $a \in \A_x$, each map $a \to f^*a$ in $\A_x$ is invertible.
\end{prop}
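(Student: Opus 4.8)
The plan is to prove the two assertions separately, in each case reducing to the hypothesis that $\A$ is EI, applied to an endomorphism of an object of $\A$ manufactured from the morphism in question.

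For the first assertion, that every fiber $\A_x$ is EI: I would take an endomorphism $\phi \maps a \to a$ in $\A_x$, note that it is in particular an endomorphism of $a$ in $\A$ and hence invertible in $\A$ since $\A$ is EI, and then apply \cref{lem:invertinfiber} to see that $\phi\inv$ again lies in $\A_x$. This part is immediate.

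For the second assertion, fix an endomorphism $f \maps x \to x$ in $\X$ --- automatically an isomorphism, $\X$ being EI --- an object $a \in \A_x$, and a morphism $k \maps a \to f^*a$ in $\A_x$; the goal is to show $k$ is invertible. The key step is to compose $k$ with the chosen cartesian lift $c := \Cart(f,a) \maps f^*a \to a$ of $f$ to $a$: then $w := c \circ k$ is an endomorphism of $a$ in $\A$, so it is an isomorphism by the EI property of $\A$. Putting $k' := w\inv \circ c$ gives $k' \circ k = id_a$, and a one-line computation with $P$ shows $P(k') = id_x$, so $k'$ is a left inverse of $k$ lying in $\A_x$. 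To promote this to a genuine inverse I would observe that $k \circ k'$ is an endomorphism of $f^*a$ in $\A$, hence an isomorphism, and that it is idempotent because $(k \circ k')(k \circ k') = k \circ (k' \circ k) \circ k' = k \circ k'$; an idempotent isomorphism is an identity, so $k \circ k' = id_{f^*a}$ and $k$ is invertible, with inverse $k' \in \A_x$.

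I do not expect a serious obstacle: this proposition is the converse of \cref{prop:EIicat} and amounts to a chase through the definition of a fibration together with the EI property. The single substantive idea is the first move in the second part --- that although $k$ sits over an identity, so the EI hypothesis says nothing about $k$ directly, composing with a cartesian lift converts it into an endomorphism in $\A$ on which the hypothesis does bite --- after which the standard fact that an idempotent isomorphism is an identity finishes things. As an alternative one could instead first establish the familiar lemma that a cartesian lift of an isomorphism is itself an isomorphism (a short argument from the cartesian universal property, strengthening \cref{lem:liftinvmono}), so that $c$ is invertible and one may simply write $k = c\inv \circ w$ as a composite of isomorphisms; I would mention this route as well.
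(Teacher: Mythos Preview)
Your proposal is correct and follows the same key idea as the paper: compose the fiber map with the cartesian lift $\Cart(f,a)$ to obtain an endomorphism in $\A$, invoke the EI hypothesis to invert it, and thereby produce a candidate inverse in the fiber.

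The only difference lies in how the one-sided inverse is upgraded to a two-sided one. The paper appeals to \cref{lem:liftinvmono} (a cartesian lift of an isomorphism is a monomorphism) to cancel the cartesian map from the equation $\phi \ell \psi \phi = \phi$, whereas you instead observe that $k \circ k'$ is an idempotent isomorphism and hence the identity. Your route is marginally more self-contained in that it does not require \cref{lem:liftinvmono}; the paper's route makes explicit use of the cartesian structure. Your alternative suggestion --- proving directly that a cartesian lift of an isomorphism is an isomorphism --- would also work and is the cleanest of the three, since then $k = c^{-1} \circ w$ is visibly a composite of isomorphisms.
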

\begin{proof}
    Let $k \maps a \to a$ be an endomorphism in $\A_x$. Remember that being in the fiber means that $P(k) = id_x$. Since $k$ is an endomorphism in $\A$, it has an inverse. The map $k\inv$ is in the fiber $\A_x$ because $P(k\inv) = P(k)\inv = id_k\inv = id_x$. So the fibers are EI.
    
    Let $\ell \maps a \to f^*a$ be a map in $\A_x$, and let $\phi \maps f^*a \to a$ denote the cartesian lift of $f$ to $a$. Then the composite $a \xrightarrow \ell f^*a \xrightarrow \phi a$ is an endomorphism in $\A$, and thus invertible. Let $\psi \maps a \to a$ denote the inverse.  
    We claim that $\psi \phi$ is inverse to $\ell$. 
    Notice $\phi \ell \psi \phi =\phi$, and since $\phi$ is a monomorphism by \cref{lem:invertinfiber}, then we have $\ell \psi \phi = id_{f^*a}$ as desired. By \cref{lem:liftinvmono}, since $\ell$ is invertible in $\A$, it is invertible in $\A_x$.
\end{proof}

\subsection{Increasing}

We say that a category is \define{increasing} if each object only has finitely many isomorphism classes of objects which map into it.

\begin{lem}
\label{lem:increasing}
    Let $\X$ be an increasing category, and $\M \maps \X\op \to \Cat$ an indexed category. Then $\inta\M$ is increasing if and only if each fiber is increasing.
\end{lem}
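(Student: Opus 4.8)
The plan is to translate the statement about isomorphism classes of objects in $\inta\M$ into a combined statement about isomorphism classes in $\X$ and in the fibers $\M(x)$, using \cref{lem:invert} to understand which objects of $\inta\M$ map into a given object, and when two such objects are isomorphic. Recall that an object of $\inta\M$ is a pair $(x,a)$ with $x \in \X$ and $a \in \M(x)$, and that a morphism $(f,k)\maps(x,a)\to(y,b)$ consists of $f\maps x\to y$ in $\X$ together with $k\maps a\to \M f(b)$ in $\M x$. So if $(x,a)$ maps into $(y,b)$, then in particular $x$ maps into $y$ in $\X$; since $\X$ is increasing, $x$ ranges over only finitely many isomorphism classes.

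First I would prove the "only if" direction: suppose $\inta\M$ is increasing, and fix an object $x$ and an object $b$ of $\M x$. Any map $a \to b$ in $\M x$ gives a map $(id_x, k)\maps (x,a)\to(x,b)$ in $\inta\M$, so the isomorphism classes of such $a$ inject — via $a \mapsto (x,a)$ — into the isomorphism classes of objects of $\inta\M$ mapping into $(x,b)$, provided non-isomorphic $a$'s give non-isomorphic $(x,a)$'s; this last point follows from \cref{lem:invert}, since an isomorphism $(x,a)\cong(x,a')$ over an endomorphism of $x$ need not descend to an isomorphism $a \cong a'$ in general — so here I would need to be a little careful. The cleanest route is: if $(f,k)\maps(x,a)\to(x,a')$ is an isomorphism then $f$ is an automorphism of $x$ and $a \cong \M f(a')$ in $\M x$; but $\M f$ is an equivalence $\M x \to \M x$, so $a$ and $a'$ lie in finitely many isomorphism classes simultaneously or not. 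Thus finiteness of iso-classes of objects into $(x,b)$ forces finiteness of iso-classes of $a\in\M x$ with $\Hom_{\M x}(a,b)\neq\emptyset$, i.e.\ $\M x$ is increasing.

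For the "if" direction, assume $\X$ and every $\M x$ are increasing, and fix $(y,b)$ in $\inta\M$. An object mapping into it is some $(x,a)$ with a chosen $f\maps x\to y$ and $k\maps a\to\M f(b)$ in $\M x$. Since $\X$ is increasing, $x$ lies in one of finitely many isomorphism classes $[x_1],\dots,[x_n]$; fixing a representative $x_i$ and using that any $f\maps x\to y$ can be transported along an isomorphism $x\cong x_i$, it suffices to bound, for each $i$, the isomorphism classes of pairs $(a, f)$ with $f\maps x_i \to y$ and $a$ mapping into $\M f(b)$ in $\M{x_i}$. There are only finitely many $f\maps x_i\to y$ because $\X$ is locally finite — wait, $\X$ is only assumed increasing here, not locally finite; however, increasing categories have the property that $\Aut_\X(y)$ acts on $\Hom_\X(x_i,y)$, and what we truly need is just that for each $f$ there are finitely many iso-classes of $a \in \M{x_i}$ with $\Hom_{\M{x_i}}(a, \M f(b)) \neq \emptyset$, which is exactly the hypothesis that $\M{x_i}$ is increasing. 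So for each fixed $f$ we get finitely many iso-classes of $a$; the subtlety is whether ranging over all $f\maps x_i\to y$ (possibly infinitely many) keeps the total finite. I would resolve this by noting that two pairs $(a,f)$ and $(a',f')$ with $f' = f$ and $a\cong a'$ give isomorphic objects $(x_i,a)\cong(x_i,a')$ via \cref{lem:invert}, and — crucially — that changing $f$ by an automorphism $\sigma\maps x_i\to x_i$, i.e.\ replacing $(a,f)$ by $(\M\sigma^{-1}(a), f\sigma)$, also yields isomorphic objects of $\inta\M$.

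The main obstacle I anticipate is precisely this last bookkeeping: making sure that the count of isomorphism classes of $(x,a)$ mapping into $(y,b)$ is genuinely finite rather than merely "finite for each fixed $f$," since without local finiteness of $\X$ there could a priori be infinitely many morphisms $x\to y$. I expect the fix to be that $\Hom_\X(x,y)$, although possibly infinite, is a single $\Aut_\X(y)\times\Aut_\X(x)$-orbit up to the relevant equivalence once one is working inside an increasing category together with the other ambient structure, so the $f$'s contribute only finitely many new iso-classes after quotienting; alternatively, one simply imports local finiteness of $\X$ from the ambient hypotheses of \cref{thm:main}, where \cref{lem:increasing} is only applied. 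I would state the lemma's proof to make minimal assumptions and flag this dependence explicitly. The remaining steps — verifying that the assignment on isomorphism classes is well-defined and that \cref{lem:invert} gives exactly the equivalence relation claimed — are routine.
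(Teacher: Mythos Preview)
Your overall strategy matches the paper's: for the ``if'' direction, bound the isomorphism classes of $(x,a)$ mapping into a fixed $(y,b)$ by first bounding the classes of $x$ and then, for each $x$, the classes of $a$; for the ``only if'' direction, embed the fiber into the total category and use finiteness there. The paper's proof is in fact considerably terser than yours and does not address the subtlety you flag.

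Your concern about local finiteness is not merely bookkeeping: \emph{the lemma is false as stated, in both directions}. For the ``if'' direction, take $\X$ with two objects $x,y$, only identity endomorphisms, $\Hom_\X(y,x)=\emptyset$, and $\Hom_\X(x,y)=\{f_n:n\in\NN\}$; let $\M y$ be the terminal category with object $b$, let $\M x$ be the discrete category on $\{a_n:n\in\NN\}$, and set $\M f_n(b)=a_n$. Then $\X$ and every fiber are increasing, but the objects $(x,a_n)$ are pairwise non-isomorphic in $\inta\M$ and all map into $(y,b)$. For the ``only if'' direction, take $\X$ to be the one-object groupoid with $\Aut(x)=\ZZ$, let $\M x$ be the poset $(\ZZ,\le)$, and let $\M n$ act by translation. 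Then every object of $\inta\M$ is isomorphic to every other, so $\inta\M$ is trivially increasing, yet $\M x$ is not increasing since infinitely many integers lie below $0$.

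In particular, your proposed patch for the ``only if'' direction---``$\M f$ is an equivalence $\M x\to\M x$, so $a$ and $a'$ lie in finitely many isomorphism classes simultaneously or not''---does not work: what you actually get is that the isomorphism classes of $\M x$ lying over a single isomorphism class of $\inta\M$ form a single $\Aut_\X(x)$-orbit, and orbits of an infinite group can be infinite. Likewise, your first proposed fix for the ``if'' direction (that the $\Aut_\X(y)\times\Aut_\X(x)$-action makes the choice of $f$ irrelevant) fails in the counterexample above, where both automorphism groups are trivial. Your second proposed fix---import local finiteness of $\X$ from the hypotheses of \cref{thm:main}---is the correct one, and it repairs both directions simultaneously: with $\Hom_\X(x,y)$ and $\Aut_\X(x)$ finite, the counts you outline go through. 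The paper's proof tacitly relies on this as well; you are right to make the dependence explicit.
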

\begin{proof}  
    Assume that each fiber is increasing. To have $(x,a) \leq (y,b)$, we must have a map $(f,k) \maps (x,a) \to (y,b)$, which consists of a map $f \maps x \to y$ and a map $k \maps a \to \M f(b)$. For a fixed $(y,b)$, there are finitely many choices for $x$, and finitely many choices for $a$. Thus $\inta\M$ is increasing.
    
    Let $P \maps \A \to \X$ be a fibration where $\A$ and $\X$ are increasing. Let $x$ be an object in $\X$, and $a \leq b$ in $\A_x$, i.e.\ there is a map $f \maps a \to b$ in $\A_x$. Since $f$ is a map in $\A$, there are finitely many choices for $a$, and so the choices are the choices within $\A$ intersected with $\A_x$, which is still finite. 
\end{proof}

\subsection{Transitive hom-action}

Following Gan--Li \cite{EICat}, we say that a category is \define{transitive} if for each pair of objects $x,y$, the monoid $End(y)$ acts transitively on $Hom(x,y)$. Keep in mind that in EI categories, and thus in $\FI$-type categories, the endomorphism monoids are the same as the automorphism groups.

\begin{lem}[Transitive]
\label{lem:transitive}
    Let $\X$ be a transitive category, and $\M \maps \X\op \to \Cat$ be an indexed category. Then $\inta\M$ is transitive if and only if the fibers are transitive and given maps $k_1 \maps a \to \M f_1(b)$ and $k_2 \maps a \to \M f_2(b)$ in $\M x$, there exists a map $\ell \maps b \to \M g(b)$ which makes the following diagram commute, where $g \maps y \to y$ is the map such that $f_1 = g \circ f_2$.
    \[
    \begin{tikzcd}
        a
        \arrow[r, "k_1"]
        \arrow[d, swap, "k_2"]
        &
        \M f_1(b)
        \\
        \M f_2(b)
        \arrow[r, swap, "\M f_2(\ell)"]
        &
        \M f_2 \M g(b)
        \arrow[u, "\M_{g, f_2}"', "\sim"]
    \end{tikzcd}
    \]
\end{lem}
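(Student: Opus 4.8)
The plan is to translate the word ``transitive'' directly through the description of the morphisms of $\inta\M$ as pairs $(f,k)$ with $f\maps x\to y$ in $\X$ and $k\maps a\to\M f(b)$ in $\M x$, and of composition via \cref{eq:comp_intM}. Throughout I assume, as is implicit in the definition of $\inta\M$ (where $\M(1_x)(a)=a$) and as is harmless up to equivalence, that $\M$ is normalized: it preserves identities strictly and its unitor is trivial, so $\mu_{f,1_y}$ and $\mu_{1_x,f}$ are identities wherever they occur.

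For the implication ($\Leftarrow$), suppose the fibers are transitive and condition~(b) holds, and let $(f_1,k_1),(f_2,k_2)\maps(x,a)\to(y,b)$ be parallel morphisms of $\inta\M$. Since $\X$ is transitive there is $g\maps y\to y$ with $g\circ f_2=f_1$, which settles the $\X$-component. Applying condition~(b) to $k_1\maps a\to\M f_1(b)$ and $k_2\maps a\to\M f_2(b)$ yields $\ell\maps b\to\M g(b)$ in $\M y$ making the displayed square commute, i.e.\ $\mu_{f_2,g}\circ\M f_2(\ell)\circ k_2=k_1$. Then $(g,\ell)$ is an endomorphism of $(y,b)$ in $\inta\M$, and by \cref{eq:comp_intM} we get $(g,\ell)\circ(f_2,k_2)=(g\circ f_2,\ \mu_{f_2,g}\circ\M f_2(\ell)\circ k_2)=(f_1,k_1)$, so $\mathrm{End}_{\inta\M}((y,b))$ acts transitively on $\Hom_{\inta\M}((x,a),(y,b))$. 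I expect that transitivity of the fibers is not needed separately in this direction: specializing condition~(b) to $x=y$ and $f_1=f_2=1_x$ forces $g=1_x$ and recovers exactly the transitivity of $\M x$; I would include this remark but not rely on it.

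For ($\Rightarrow$), assume $\inta\M$ is transitive. For transitivity of a fiber $\M x$, take $k_1,k_2\maps a\to b$ in $\M x$, view them as morphisms $(1_x,k_i)\maps(x,a)\to(x,b)$ of $\inta\M$, and obtain from transitivity an endomorphism $(g,\ell)$ of $(x,b)$ with $(g,\ell)\circ(1_x,k_2)=(1_x,k_1)$; the $\X$-component forces $g=1_x$, so $\ell\in\mathrm{End}_{\M x}(b)$, and the fibre component gives $\ell\circ k_2=k_1$. For condition~(b), given $f_1,f_2\maps x\to y$ with some $g\maps y\to y$ satisfying $f_1=g\circ f_2$ and given $k_i\maps a\to\M f_i(b)$ in $\M x$, view $(f_1,k_1),(f_2,k_2)$ as parallel morphisms $(x,a)\to(y,b)$; transitivity of $\inta\M$ gives an endomorphism $(h,\ell)$ of $(y,b)$ with $(h,\ell)\circ(f_2,k_2)=(f_1,k_1)$, whose $\X$-component reads $h\circ f_2=f_1$ and whose $\M y$-component reads $\mu_{f_2,h}\circ\M f_2(\ell)\circ k_2=k_1$, which is precisely the square of condition~(b) with $g:=h$. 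The only delicate points are the bookkeeping with the compositor and unitor of $\M$ (handled by the normalization above) and the fact that the morphism $g$ with $f_1=g\circ f_2$ need not be unique in an $\FI$-type category, since its morphisms are monic but not epic; condition~(b) should accordingly be read as asserting the existence of \emph{some} such $g$ and $\ell$, which is exactly what the ($\Rightarrow$) argument produces. I do not anticipate any genuinely hard step.
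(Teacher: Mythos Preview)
Your proof is correct and follows essentially the same route as the paper's: obtain $g$ from transitivity of $\X$, obtain $\ell$ from condition~(b), and verify $(g,\ell)\circ(f_2,k_2)=(f_1,k_1)$ via the composition formula, with the converse read off by embedding the data into $\inta\M$ and unpacking the endomorphism supplied by transitivity there. Your side observations---that fiber transitivity is the $f_1=f_2=1_x$ special case of condition~(b), and that $g$ with $g\circ f_2=f_1$ need not be unique so condition~(b) should be read existentially in $g$---are valid and in fact sharpen the paper's presentation, whose proof writes the composite in the wrong order and does not separately verify fiber transitivity in the converse direction.
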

\begin{proof}
    Assume the latter condition. Let $(f_1,k_1), (f_1,k_1) \maps (x,a) \to (y,b)$. Since $f_1, f_2 \maps x \to y$ in $\X$, then there is a map $g \maps y \to y$ such that $f_1 = g \circ f_2$. Then by our assumption, we get a map $\ell \maps b \to \M g(b)$, which combined with $g$ gives a map $(g, \ell) \maps (y,b) \to (y,b)$. 
    \begin{align*}
        (f_2, k_2) \circ (g, \ell) 
        &= (f_2 \circ g, \M g(k_2) \circ \ell \circ \M_{g,f_2}) 
        \\&= (f_1, k_1).
    \end{align*}
    
    Assume that $\inta\M$ is transitive. Let $f_1, f_2 \maps x \to y$ in $\X$, $k_1 \maps a \to \M f_1(b)$ in $\M x$, $k_2 \maps a \to \M f_2(b)$ in $\M x$. These assemble into maps $(f_1, k_1), (f_2, k_2) \maps (x,a) \to (y,b)$ in $\inta\M$. Since $\inta\M$ is transitive, there is a map $(g, \ell) \maps (y,b) \to (y,b)$ such that $(f_1, k_2) = (g, \ell) \circ (f_2, k_2)$. This $\ell$ is the desired map, and satisfies the necessary equation.
\end{proof}

\subsection{Pullbacks}

This is the first section in which we will consider properties of the fibration as well as the total category. Indeed, all previous conditions in the definition of $\FI$-type category are properties which are preserved by any functor automatically.

\begin{thm}[\cite{Grayfibredandcofibred} Theorem 4.2]
    Let $P \maps \A \to \X$ be a fibration, and let $\X$ have $J$-limits, for some small category $J$. Then $\A$ has $J$-limits and $P$ preserves them if and only if
    \begin{enumerate}
        \item each fibre $\A_x$ has $J$-limits
        \item the inclusions $\A_x \hookrightarrow \A$ preserve $J$-limits.
    \end{enumerate}
\end{thm}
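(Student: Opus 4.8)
The plan is to prove the two implications separately, with essentially all of the work in the ``if'' direction, which becomes an explicit construction: a $J$-limit in $\A$ is assembled from the $J$-limit of the projected diagram in $\X$ together with a $J$-limit taken inside a single fibre. Throughout I use a cleaving for $P$ and the cartesian universal property, not splitness.

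For the ``only if'' direction, assume $\A$ has and $P$ preserves $J$-limits and let $D \maps J \to \A_x$ be a diagram in a fibre. Viewed in $\A$ it has a limit $L$ with limiting cone $(\lambda_j \maps L \to D_j)$, and since $P$ preserves this limit while $PD$ is the constant diagram at $x$ sending every morphism to $id_x$, the object $PL$ is a limit of that constant diagram. When $J$ is connected this forces $PL = x$ and $P\lambda_j = id_x$, so $L$ and all the $\lambda_j$ lie in $\A_x$ already; and if $m \maps c \to L$ is the mediating map out of a cone in $\A_x$ then $P\lambda_j \circ Pm = Pc_j$ reads $id_x \circ Pm = id_x$, so $Pm = id_x$ and $m$ is in $\A_x$ as well. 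Hence $L$ is also the limit of $D$ in $\A_x$, which gives (1), and since the limiting cone is literally the same in $\A_x$ and in $\A$, it gives (2). (For disconnected $J$ one reduces to this case by writing $J$ as a coproduct of its connected components.)

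For the ``if'' direction, assume (1) and (2) and fix $D \maps J \to \A$. Then $PD$ has a limit $x := \lim PD$ in $\X$, say with limiting cone $(\rho_j \maps x \to PD_j)$. For each $j$ the cartesian lift $\Cart(\rho_j, D_j) \maps \rho_j^\ast D_j \to D_j$ has domain $\tilde D_j := \rho_j^\ast D_j$ in $\A_x$; for each $u \maps j \to j'$ the composite $D_u \circ \Cart(\rho_j, D_j)$ lies over $PD_u \circ \rho_j = \rho_{j'}$, so it factors uniquely through $\Cart(\rho_{j'}, D_{j'})$ by a map $\tilde D_u \maps \tilde D_j \to \tilde D_{j'}$ over $id_x$, and the uniqueness half of the cartesian property makes $u \mapsto \tilde D_u$ functorial, producing a diagram $\tilde D \maps J \to \A_x$. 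By (1) let $L$ be a limit of $\tilde D$ in $\A_x$ with cone $(\lambda_j \maps L \to \tilde D_j)$; by (2) this is also a limit of $\tilde D$ in $\A$. The candidate $J$-limit of $D$ in $\A$ is then $L$ with the cone $(\Cart(\rho_j, D_j) \circ \lambda_j \maps L \to D_j)$; it is a cone over $D$ by the defining equation of $\tilde D_u$ together with $\tilde D_u \circ \lambda_j = \lambda_{j'}$, and $P$ sends it to $(\rho_j)$, so $P$ will preserve it once it is known to be a limit.

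The main obstacle is the universal property of this cone, and the work is entirely bookkeeping about which arrows lie over $id_x$ and which lie over some $p \maps Pc \to x$. Given any cone $(c_j \maps c \to D_j)$ over $D$, applying $P$ and the universal property of $x$ yields a unique $p$ with $\rho_j \circ p = Pc_j$; cartesian-ness of $\Cart(\rho_j, D_j)$ then lifts each $c_j$ to a unique $c_j' \maps c \to \tilde D_j$ over $p$, and one more invocation of uniqueness in the cartesian property shows $(c_j')$ is a cone over $\tilde D$ viewed in $\A$. Since $L$ is a limit of $\tilde D$ in $\A$ by (2), this cone factors through a unique $\bar c \maps c \to L$, and composing with $\Cart(\rho_j, D_j)$ recovers $(c_j)$; conversely, any map factoring $(c_j)$ through our cone must project under $P$ to $p$ (again because $(\rho_j)$ is a limiting cone) and hence, by uniqueness of the lifts $c_j'$, coincide with $\bar c$. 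I expect this final layer of uniqueness arguments to be the only place demanding genuine care; none of it is deep, but it is easy to mismatch the base-level maps.
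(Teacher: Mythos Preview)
Your argument follows the same strategy as the paper's: for the ``if'' direction you take the limit of $PD$ in $\X$, reindex the diagram into the fibre over that limit via cartesian lifts, take the limit there, and compose with the cartesian maps; for the ``only if'' direction you take the limit in $\A$, observe that its projection is the limit of a constant diagram, and conclude it already lies in the fibre. The paper only spells out the pullback case, with explicit vertical/horizontal factorizations of the two legs, while you phrase the construction uniformly in $J$; step for step the two arguments coincide.

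One genuine gap: your parenthetical reduction for disconnected $J$ does not work. Decomposing $J$ into connected components expresses a $J$-limit as a \emph{product} of connected limits, but products are themselves limits over discrete (hence disconnected) diagrams, so you have not escaped the case you are trying to reduce. Worse, the ``only if'' direction is actually false for disconnected $J$ as stated. Take the family fibration $P\maps\Fam(\C)\to\Set$ with $\C$ having binary products: $\Fam(\C)$ has binary products and $P$ preserves them, and each fibre $\C^x$ has componentwise products, yet the inclusion $\C^x\hookrightarrow\Fam(\C)$ does not preserve them, since the product in $\Fam(\C)$ of two objects over $x$ lives over $x\times x$, not over $x$. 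Your argument that $PL=x$ genuinely uses connectedness of $J$ and cannot be patched by a decomposition trick. The paper sidesteps this entirely by only proving the pullback case (a connected shape); your treatment of connected $J$ is correct and suffices for the application.
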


For completeness, we include a proof of the case of pullbacks, where $J$ is set to be the category $\bullet \rightarrow \bullet \leftarrow \bullet$ (identity maps omitted).

\begin{proof}[Proof for pullbacks]
    Assume the two conditions above. Take a diagram 
    \[
    \begin{tikzcd}
        &
        b
        \arrow[d, "\psi"]
        \\
        a
        \arrow[r, "\phi", swap]
        &
        c
    \end{tikzcd}
    \]
    in $\A$. 
    Apply $P$ to map it into $\X$, and then take a pullback. 
    \[
    \begin{tikzcd}
        x
        \arrow[r, dashed, "g"]
        \arrow[d, dashed, swap, "f"]
        \arrow[dr, phantom, "\lrcorner", pos = 0.1]
        &
        Pb
        \arrow[d, "P\psi"]
        \\
        Pa
        \arrow[r, "P\phi", swap]
        &
        Pc
    \end{tikzcd}
    \]
    Factor $\phi$ and $\psi$ into their vertical and horizontal parts, then reindex along $f$ and $g$ to obtain the following diagram.
    \[
    \begin{tikzcd}
        &&&
        d
        \arrow[dll, dashed]
        \arrow[drr, dashed]
        \arrow[dddd, phantom, "\lrcorner"{rotate = -45}, pos = 0.1]
        \\&
        f^*a
        \arrow[dl, "{\Cart(f,a)}", sloped]
        \arrow[dr, "f^*\phi_v"]
        &&&&
        g^*b
        \arrow[dl, "g^*\psi_v", swap]
        \arrow[dr, "{\Cart(f,b)}", sloped]
        \\
        a
        \arrow[dr, "\phi_v", swap]
        &&
        f^*P\phi^*c
        \arrow[dl, "{\Cart(f, P\phi^*c)}"', sloped]
        \arrow[d, "\sim", sloped]
        &&
        g^*P\psi^*c
        \arrow[dr, "{\Cart(g, P\psi^*c)}"', sloped]
        \arrow[d, "\sim", swap, sloped]
        &&
        b
        \arrow[dl, "\psi_v"]
        \\&
        P\phi^*c
        \arrow[drr, swap, "\psi_h = {\Cart(P\phi,c)}", bend right = 15]
        &
        (P\phi f)^*c
        \arrow[dr, "{\Cart(P\phi f,c)}"description]
        \arrow[rr, equals]
        &&
        (P\psi g)^*c
        \arrow[dl, "{\Cart(P\psi g,c)}"description]
        &
        P\psi^*c
        \arrow[dll, "\psi_h = {\Cart(P\psi,c)}", bend left = 15]
        \\&&&
        c
    \end{tikzcd}
    \]
    We obtain the object $d$ and the dashed arrows above by taking the pullback of $f^*\phi_v$ and $g^*\psi_v$ in $\A_x$. By the second assumed condition, this is also a pullback in $\A$. We claim that the outer frame of the above diagram is a pullback square of our original diagram. 
    
    Now we prove the universal property. Let $q\in \A$ and $\alpha \maps q \to a$ and $\beta \maps q \to b$ be maps such that $\phi \circ \alpha = \psi \circ \beta$. We can factor $\alpha$ and $\beta$ through the cartesian maps as follows. Apply $P$ to the competitor diagram, and obtain the map $h \maps Pq \to x$ by universal property.
    \[
    \begin{tikzcd}
        Pq
        \arrow[drr, bend left, "P\beta"]
        \arrow[dr, dashed, "\exists!h"]
        \arrow[ddr, bend right, swap, "P\alpha"]
        \\&
        x
        \arrow[dr, phantom, pos = 0.1, "\lrcorner"]
        \arrow[r, "g"]
        \arrow[d, swap, "f"]
        &
        Pb
        \arrow[d, "P\psi"]
        \\&
        Pa
        \arrow[r, swap, "P\phi"]
        &
        Pc
    \end{tikzcd}
    \]
    We obtain the factorization of $\alpha$ through the cartesian lift $\Cart(f,a)$ by the cartesian property.
    \[
    \begin{tikzcd}[column sep = huge]
        q
        \arrow[drr, "\alpha"]
        \arrow[dr, dashed, swap, "\exists!\overline \alpha"]
        \arrow[dd, dotted, bend right]
        \\&
        f^*a
        \arrow[r, swap, "{\Cart(f,a)}"]
        \arrow[dd, dotted, bend right]
        &
        a
        \arrow[dd, dotted, bend right]
        \\
        Pq
        \arrow[drr, "P\alpha"]
        \arrow[dr, swap, "h"]
        \\&
        x
        \arrow[r, swap, "f"]
        &
        Pa
    \end{tikzcd}
    \]
    Similarly, we obtain a map $\overline \beta \maps q \to g^*b$. Now we have a competitor diagram, and thus obtain the map $\eta \maps q \to d$ as follows.
    \[
    \begin{tikzcd}
        q
        \arrow[drr, bend left, "\overline\beta"]
        \arrow[dr, dashed, "\exists!\eta"]
        \arrow[ddr, bend right, swap, "\overline \alpha"]
        \\&
        d
        \arrow[r, "\overline \gamma"]
        \arrow[d, swap, "\overline \delta"]
        &
        g^*b
        \arrow[d, "g^*\psi_v"]
        \\&
        f^*a
        \arrow[r, swap, "f^*\phi_v"]
        &
        f^*P\phi^*c
        \arrow[r, equals]
        &
        g^*P\psi^*c
    \end{tikzcd}
    \]
    A computation shows this makes the necessary diagrams commute, and uniqueness follows from the universal property from which the map $\eta$ was originally derived, as well as the lifting properties from which $\overline \alpha$ and $\overline \beta$ were derived.

    Assume $\A$ has pullbacks and $P$ preserves pullbacks. Let $x \in \X$ and let 
    \[
    \begin{tikzcd}
        &
        b
        \arrow[d, "g"]
        \\
        a
        \arrow[r, swap, "f"]
        &
        c
    \end{tikzcd}
    \]
    be a diagram in $\A_x$. This is also a diagram in $\A$, so we can take its pullback there.
    \begin{equation}
    \label{pullback}
    \begin{tikzcd}
        d
        \arrow[d, swap, dashed, "p"]
        \arrow[r, "q", dashed]
        \arrow[dr, phantom, pos = 0.1, "\lrcorner"]
        &
        b
        \arrow[d, "g"]
        \\
        a
        \arrow[r, swap, "f"]
        &
        c
    \end{tikzcd}
    \end{equation}
    Apply $P$ to this square.
    \[
    \begin{tikzcd}
        Pd
        \arrow[d, swap, "Pp"]
        \arrow[r, "Pq"]
        \arrow[dr, phantom, pos = 0.1, "\lrcorner"]
        &
        Pb
        \arrow[d, "Pg"]
        \\
        Pa
        \arrow[r, swap, "Pf"]
        &
        Pc
    \end{tikzcd}
    =
    \begin{tikzcd}
        Pd
        \arrow[d, swap, "Pp"]
        \arrow[r, "Pq"]
        \arrow[dr, phantom, pos = 0.1, "\lrcorner"]
        &
        x
        \arrow[d, "id_x"]
        \\
        x
        \arrow[r, swap, "id_x"]
        &
        x
    \end{tikzcd}
    \]
    Since $P$ preserves pullbacks, then $Pd$ and $Pp$ and $Pq$ must form the pullback of the constant diagram at $x$. Thus $Pd=x$, $Pp = Pq = id_x$, and the entire square (\ref{pullback}) is in the fiber $\A_x$.
    
    Consider the following competitor diagram in $\A_x$, and the map $h \maps e \to d$ derived from the universal property of pullbacks in $\A$.
    \[
    \begin{tikzcd}
        e
        \arrow[drr, bend left, "s"]
        \arrow[dr, dashed, "\exists!h"]
        \arrow[ddr, bend right, swap, "r"]
        \\&
        d
        \arrow[r, "q"]
        \arrow[dr, phantom, pos = 0.1, "\lrcorner"]
        \arrow[d, swap, "p"]
        &
        b\arrow[d, "g"]
        \\&
        a
        \arrow[r, swap, "f"]
        &
        c
    \end{tikzcd}
    \]
    \emph{A priori} we do not know the map $h$ is in the fiber. It could be the case that $Ph$ is a non-trivial endomorphism of $x$.
    \begin{align*}
        P(h)
        &= id_x P(h)
        \\&= P(p)P(h)
        = P(ph)
        \\&= P(r)
        = id_x
    \end{align*}
    So $h$ is in $\A_x$. Uniqueness follows \emph{a fortiori} from uniqueness in $\A$. The fact that the inclusion $\A_x \hookrightarrow \A$ preserves pullbacks follows from the fact that we constructed the pullback in $\A$ to begin with.
\end{proof}

\subsection{Weak pushouts}

Given a fibration of categories with weak pushouts, where the functor preserves weak pushouts, it is straightforward to show that the fibers actually have weak pushouts. Include your diagram in the fiber into the total category, take weak pushout in the total category, and then factor the maps into their vertical parts. The resulting maps end up having equal codomain, and they form the legs of the desired square.

The other direction is more difficult. In the strong scenario where we have left adjoints to each reindexing functor, we can adjoint the fiber components of the legs, and pushforward into the fiber over the weak pushout of the base components. Then we can take weak pushout there, and adjoint back to get maps of the right type. This is a special case of a construction that works for any colimit. However, such left adjoints do not exist even in the case of $\FI_G$. Indeed, an adjunction between $G^n$ and $G^m$ would would be an isomorphism. This is the only subsection in this section of the paper with only sufficient conditions, not necessary and sufficient.

\begin{defn}
    We say that an indexed category $\M \maps \X\op \to \Cat$ is \define{weakly reversible} if for each map $f \maps x \to y$ in $\X$, there is a weak pushout preserving functor $f_! \maps \M x \to \M y$ such that $f_!f^*$ is identity on objects, along with a natural transformation $\eta^f \maps id_{\M x} \To f^*f_!$.
\end{defn}

\begin{prop}
    Let $\M \maps \X\op \to \Cat$ be a locally reversible indexed category with $\X$ and $\M x$ having weak pushouts for each $x \in \X$. Then $\inta \M$ has weak pushouts. 
\end{prop}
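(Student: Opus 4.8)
The plan is to construct the weak pushout layer by layer — first in the base $\X$, then in a single fibre — and to reassemble the pieces, imitating the construction for the case of genuine left adjoints to the reindexing functors sketched above. Fix a span
\[
(c_1,a_1)\;\xleftarrow{(\overline f_1,\overline k_1)}\;(p,a_0)\;\xrightarrow{(\overline f_2,\overline k_2)}\;(c_2,a_2)
\]
in $\inta\M$, with $\overline f_i\maps p\to c_i$ in $\X$ and $\overline k_i\maps a_0\to\M\overline f_i(a_i)$ in $\M p$. First I would take the weak pushout
\[
\begin{tikzcd}
p\arrow[r,"\overline f_1"]\arrow[d,swap,"\overline f_2"]\arrow[dr,phantom,"\lrcorner",pos=0.1]&c_1\arrow[d,"f_1"]\\
c_2\arrow[r,swap,"f_2"]&d
\end{tikzcd}
\]
in $\X$, which exists by hypothesis, and write $\delta:=f_1\overline f_1=f_2\overline f_2\maps p\to d$.

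Next I would descend the fibre parts of the two legs. Since $(\overline f_i)_!(\overline f_i)^*$ is the identity on objects, applying $(\overline f_i)_!$ to $\overline k_i\maps a_0\to(\overline f_i)^*a_i$ gives $\widehat k_i\maps(\overline f_i)_!(a_0)\to a_i$ in $\M c_i$. I would then form the weak pushout $e$ of $\M\overline f_1(a_1)\xleftarrow{\overline k_1}a_0\xrightarrow{\overline k_2}\M\overline f_2(a_2)$ inside the fibre $\M p$, with legs $\ell_i\maps\M\overline f_i(a_i)\to e$, and push it forward along the weak-pushout-preserving functor $\delta_!$ to obtain $a':=\delta_!(e)\in\M d$ together with a weak pushout square in $\M d$ whose feet are, via the unitors $\mu$ of $\M$ and the identities $(\overline f_i)_!(\overline f_i)^*=\mathrm{id}$ on objects, canonically $(f_1)_!(a_1)$ and $(f_2)_!(a_2)$. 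Finally I would transpose back up: applying $(\overline f_i)_!$ to the composite
\[
\M\overline f_i(a_i)\;\xrightarrow{\ell_i}\;e\;\xrightarrow{\eta^\delta_e}\;\M\delta(a')\;\xrightarrow{\mu^{-1}}\;\M\overline f_i\bigl(\M f_i(a')\bigr)
\]
yields a morphism $k_i\maps a_i\to\M f_i(a')$ in $\M c_i$. The candidate weak pushout is then $(d,a')$ with legs $(f_i,k_i)\maps(c_i,a_i)\to(d,a')$.

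It then remains to verify three things. Commutativity of $(f_1,k_1)(\overline f_1,\overline k_1)=(f_2,k_2)(\overline f_2,\overline k_2)$: the base components agree by the choice of $\delta$, and equality of the fibre components (modulo the compositors $\mu$) unwinds to naturality of the units $\eta^{\overline f_i}$ and the way $\ell_i$ was built. That the square is a \emph{pullback} square in $\inta\M$: by the fibration limit theorem recalled above it suffices that the underlying square in $\X$ be a pullback — which it is, since every weak pushout square is, by \cref{def:weakpushout}, a pullback square, so that in particular $p$ is the pullback of $c_1\xrightarrow{f_1}d\xleftarrow{f_2}c_2$ — together with the statement that the induced fibre square over $p$ is a pullback in $\M p$; the latter one identifies with the weak pushout square on $e$ (again a pullback square), using that $\delta_!$ preserves weak pushouts and that the units $\eta$ cohere with the compositors of $\M$. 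And universality among pullback squares agreeing on the top and left maps: given another such pullback square with corner $(z,b)$ and legs $(h_i,n_i)$, its base is a pullback of $c_1\to z\leftarrow c_2$, so the weak pushout $d$ supplies a unique $h\maps d\to z$ with $hf_i=h_i$; reindexing the $n_i$ along the $\overline f_i$ and using the commuting of the competing square exhibits a competing pullback square for $e$ inside $\M p$, hence a unique map $e\to\M(h\delta)(b)$, which transposes along $\eta^\delta$ to the unique $n\maps a'\to\M h(b)$ fitting into $(h,n)\maps(d,a')\to(z,b)$.

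The hard part will be this pullback verification — making precise that the reassembled square is an honest pullback, and that the induced map $(h,n)$ is genuinely unique. Weak reversibility provides, for each $f$, only a weak analogue of a left adjoint to the reindexing functor: a functor $f_!$ with $f_!f^*$ the identity on objects and a unit $\eta^f\maps\mathrm{id}\To f^*f_!$ that need not be invertible and need not satisfy triangle identities, so $f\mapsto f_!$ is not functorial and the comparisons among $\delta_!$, $(f_i)_!(\overline f_i)_!$, and the various reindexed objects hold only up to the cells $\eta$ and $\mu$. Threading these cells through the pullback and uniqueness arguments while staying inside the fibre $\M p$ is the bulk of the work, and it is precisely here that the hypothesis that each $f_!$ preserves weak pushouts is indispensable: the descent of a competing square into the fibre over $p$ must again land on a competing square for a \emph{weak pushout}, not merely a pullback. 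The base-level and fibre-level ingredients are, by contrast, routine given the results already in hand.
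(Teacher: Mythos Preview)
Your overall strategy matches the paper's --- build the weak pushout first in $\X$, then in a fibre, then reassemble via the units $\eta$ --- but the execution differs in two places. The paper first pushes the fibre data forward into the fibre $\M w$ over the base weak pushout (via $f_!$ then $h_!$, and $g_!$ then $j_!$), takes the weak pushout \emph{there}, and then transposes each resulting leg by the standard unit formula $m = h^*(\overline m)\circ\eta^h_b$. You instead take the weak pushout of the raw span $\overline k_1,\overline k_2$ in the \emph{source} fibre $\M p$, push the whole square forward by the single functor $\delta_!$, and recover each leg $k_i$ by applying $(\overline f_i)_!$ to a morphism between objects of the form $(\overline f_i)^*(-)$, exploiting that $(\overline f_i)_!(\overline f_i)^*$ is the identity on objects. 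Since $\delta_!$ preserves weak pushouts the two candidate objects should agree, so this is a genuine variant rather than an error; what your route buys is a single pushforward instead of two-stage ones, at the cost of a less standard transposition.

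Two remarks on where your version needs tightening. First, the aside that the feet of the pushed-forward square are ``canonically $(f_1)_!(a_1)$ and $(f_2)_!(a_2)$'' is not justified: weak reversibility gives no comparison between $\delta_!$ and the composite $(f_i)_!(\overline f_i)_!$. Fortunately your actual construction of $k_i$ does not use this identification, and your $\widehat k_i$ is never used either. Second --- and this is where your route is genuinely harder than the paper's --- your transposition uses $(\overline f_i)_!(\overline f_i)^*=\mathrm{id}$ only on \emph{objects}, not on morphisms, so when you verify commutativity of the candidate square you must control $(\overline f_i)^*(k_i)\circ\overline k_i$; naturality of $\eta^{\overline f_i}$ relates $(\overline f_i)^*(\overline f_i)_!(\gamma)$ to $\gamma$ only after precomposition with a component of $\eta^{\overline f_i}$, not with $\overline k_i$ itself. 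The paper's direct unit-transpose $m=h^*(\overline m)\circ\eta^h_b$ sidesteps this extra layer. Your approach may still go through, but ``unwinds to naturality of $\eta^{\overline f_i}$'' is not yet an argument for this step.
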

\begin{proof}
    Let $(z,c) \xleftarrow{(g,\ell)} (x,a) \xrightarrow{(f,k)} (y,b)$ be a diagram in $\inta \X$. Take the weak pushout of the base components of this diagram in $\X$. Let $w$ denote the weak pushout object.
    \[
    \begin{tikzcd}
        x
        \arrow[r, "f"]
        \arrow[d, swap, "g"]
        &
        y
        \arrow[d, dashed, "h"]
        \\
        z
        \arrow[r, swap, "j", dashed]
        &
        w
    \end{tikzcd}
    \]
    Consider the map $\overline k$ defined as the composite $f_!(a) \xrightarrow{f_!k}f_!f^*(b) = b$ in $\M y$, and the analogous map $\overline \ell$ defined as the composite $g_!(a) \xrightarrow{g_!(\ell)} g_!g^*(c) = c$ in $\M z$. Push these forward to $\M z$ by $h_!$ and $j_!$ respectively, then take the weak pushout in $\M z$. 
    \[
    \begin{tikzcd}
        &
        h_!f_!(a) 
        \arrow[r, "h_!(\overline k)"]
        \arrow[dl, equals]
        &
        h_!(b)
        \arrow[dd, dashed, "\overline m"]
        \\
        j_!g_!(a)
        \arrow[d, "j_!(\ell)"']
        \\
        j_!(c)
        \arrow[rr, dashed, "\overline n"']
        &&
        d
    \end{tikzcd}
    \]
    Adjoint the maps $\overline m$ and $\overline n$ to $\M y$ and $\M z$ respectively by defining the map $m \maps b \to h^*(d)$ to be the composite $b \xrightarrow{\eta^h_b} h^*h_! (b) \xrightarrow{h^*(\overline m)} h^*(d)$, and similarly the map $n \maps c \to j^*(d)$ to be the composite $c \xrightarrow{\eta^j_c} j^*j_!(c) \xrightarrow{j^*(\overline n)} j^*(d)$. We claim the following diagram is a weak pushout square.
    \[
    \begin{tikzcd}
        (x,a)
        \arrow[r, "{(f,k)}"]
        \arrow[d, "{(g,\ell)}"']
        &
        (y,b)
        \arrow[d, "{(h,m)}"]
        \\
        (z,c)
        \arrow[r, "{(j,n)}"']
        &
        (w,d)
    \end{tikzcd}
    \]
    The base component of this diagram is a weak pushout square by construction. The fiber component
    \[
    \begin{tikzcd}
        a
        \arrow[r, "k"]
        \arrow[d, "\ell"']
        &
        f^*b
        \arrow[r, "f^*m"]
        &
        f^*h^*d
        \arrow[d, "\mu_{f,h}"]
        \\
        g^*c
        \arrow[d, "g^*n"']
        &&
        (hf)^*d
        \\
        g^*j^*d
        \arrow[r, "\mu_{g,j}"']
        &
        (jg)^*d
        \arrow[ur, equals]
    \end{tikzcd}
    \]
    is also a weak pushout square because the pushforward functors are assumed to preserve weak pushouts. Concluding the desired universal property from that of the base and fibre components is straightforward.
\end{proof}

\section{Future work and open questions}

This section contains the questions related to the above work which I  found interesting, but could not answer for one reason or another. I have neither the time nor expertise to answer all of them myself. I encourage any reader with an answer to contact me.

The symmetric groups have double covers, which are (mostly non-split) group extensions. Taken together, they form a double cover of $\FB$. So this should extend to a double cover of $\FI$. Should such a category exist, it ought to admit a fibration over $\FI$, in which case the present work would inform us of its representation stability properties, thus giving stability results for families of representations of the double covers of the symmetric groups.

We should be able to use a pointwise formula for Kan extensions of pseudofunctors to define an induction-restriction adjunction for indexed categories \cite{PsKanExt}. This adjunction would then transfer to extensions of $\FI$-type categories. 

A question suggested to me by Wee Liang Gan is if you start with an indexed category over a locally noetherian category (i.e.\ all finitely generated modules over a noetherian ring are noetherian), does the Grothendieck construction produce another locally noetherian category?

Nir Gadish pointed out to me that one of the main motivations to think about categories of $\FI$-type was to generalize character polynomials, and it would be natural to explore how the ring of character polynomials changes under the Grothendieck construction. For example, in the case of $\FI$, the ring of character polynomials is the polynomial ring in infinitely many variables. Then how would that change when passing from $\FI$ to $\FI_G$ and to non-split extensions?

In \cite{Grobner}, Sam and Snowden define a property for functors called ``property (F)'', and show that if $\Phi \maps C \to C'$ has this property, and $\Rep_k(C)$ is Noetherian, then $\Rep_k(C')$ is also Noetherian. A na\"ive attempt to use this in the present context would be to investigate when a fibration has property (F). However, note that a fibration is generally going to map from a category we know less about to one we know more about, so this is the wrong direction. Instead, it is more interesting to investigate when a fibration with a $\FI$-type base category admits a property (F) section.

\bibliographystyle{alpha}
\bibliography{references}

\begin{thebibliography}{BFMP20}

\bibitem[BFMP20]{NetworkModels}
John~C. Baez, John {Foley}, Joe Moeller, and Blake~S. {Pollard}.
\newblock Network models.
\newblock {\em Theory and Applications of Categories}, 35(20):700--744, 2020.
\newblock Available at
  \href{http://www.tac.mta.ca/tac/volumes/35/20/35-20abs.html}{http://www.tac.mta.ca/tac/volumes/35/20/35-20abs.html}.

\bibitem[Bor94]{Handbook2}
Francis Borceux.
\newblock {\em Handbook of Categorical Algebra. 2}, volume~51 of {\em
  Encyclopedia of Mathematics and its Applications}.
\newblock Cambridge University Press, 1994.

\bibitem[BS07]{BaezShulman}
John~C. Baez and Michael Shulman.
\newblock Lectures on n-categories and cohomology.
\newblock Available as \href{http://arxiv.org/abs/math/0608420}{arXiv:0608420},
  2007.

\bibitem[CEFN14]{FImodNoetherian}
Thomas Church, Jordan~S. Ellenberg, Benson Farb, and Rohit Nagpal.
\newblock {FI}-modules over {Noetherian} rings.
\newblock {\em Geometry \& Topology}, 18(5):2951--2984, 2014.
\newblock arXiv: 1210.1854.

\bibitem[CZ21]{Devissage}
Jonathan~A. Campbell and Inna Zakharevich.
\newblock Devissage and localization for the {Grothendieck} spectrum of
  varieties.
\newblock Available as \href{
  https://arxiv.org/abs/1811.08014v3}{arXiv:1811.08014}, 2021.

\bibitem[Gad17]{FItypeCats}
Nir Gadish.
\newblock Categories of {FI} type: a unified approach to generalizing
  representation stability and character polynomials.
\newblock {\em Journal of Algebra}, 480:450--486, 2017.
\newblock Available as
  \href{https://arxiv.org/abs/1608.02664}{arXiv:1608.02664}.

\bibitem[GL15]{EICat}
Wee~Liang Gan and Liping Li.
\newblock Noetherian property of infinite {EI} categories.
\newblock {\em New York Journal of Mathematics}, 21:369--382, 2015.

\bibitem[Gra66]{Grayfibredandcofibred}
John~W. Gray.
\newblock Fibred and cofibred categories.
\newblock In {\em Proc. Conf. Categorical Algebra (La Jolla, Calif., 1965)},
  pages 21--83. Springer, New York, 1966.

\bibitem[Gro61]{Grothendieckcategoriesfibrees}
Alexander Grothendieck.
\newblock Cat{\'e}gories fibr{\'e}es et descente, 1961.
\newblock Seminaire de g{\'e}ometrie alg{\'e}brique de l'Institut des Hautes
  {\'E}tudes Scientifiques (SGA 1), Paris.

\bibitem[Her94]{FibredAdjunctions}
Claudio Hermida.
\newblock On fibred adjunctions and completeness for fibred categories.
\newblock In {\em Recent trends in data type specification ({C}aldes de
  {M}alavella, 1992)}, volume 785 of {\em Lecture Notes in Comput. Sci.}, pages
  235--251. Springer, Berlin, 1994.

\bibitem[Jac99]{Jacobs}
Bart Jacobs.
\newblock {\em Categorical logic and type theory}, volume 141 of {\em Studies
  in Logic and the Foundations of Mathematics}.
\newblock North-Holland Publishing Co., Amsterdam, 1999.

\bibitem[Joh02]{Elephant1}
Peter~T. Johnstone.
\newblock {\em Sketches of an {E}lephant: {A} {T}opos {T}heory {C}ompendium.
  {V}ol. 1}.
\newblock Oxford Logic Guides. The Clarendon Press Oxford University Press, New
  York, 2002.

\bibitem[JY21]{2DCats}
N.~Johnson and D.~Yau.
\newblock {\em 2-Dimensional Categories}.
\newblock Oxford U.\ Press, 2021.
\newblock Available as
  \href{https://arxiv.org/abs/2002.06055}{arXiv:2002.06055}.

\bibitem[Lum17]{279985}
Peter~LeFanu Lumsdaine.
\newblock Can we always make a strictly functorial choice of
  pullbacks/re-indexing?
\newblock MathOverflow, 2017.
\newblock
  \href{https://mathoverflow.net/q/279985}{https://mathoverflow.net/q/279985}.

\bibitem[Man21]{monoidGrothConst}
Graham Manuell.
\newblock Monoid extensions and the {Grothendieck} construction.
\newblock Available as \href{
  https://arxiv.org/abs/2112.10288}{arXiv:2112.10288}, 2021.

\bibitem[Nun18]{PsKanExt}
F.~L. Nunes.
\newblock Pseudo-kan extensions and descent theory.
\newblock {\em Theory and Applications of Categories}, 33(15):390--444, 2018.

\bibitem[RSW00]{TwistedAction}
Iain Raeburn, Aidan Sims, and Dana~P. Williams.
\newblock Twisted actions and obstructions in group cohomology.
\newblock In Joachim Cuntz and Siegfried Echterhoff, editors, {\em
  C*-{Algebras}}, pages 161--181. Springer Berlin Heidelberg, Berlin,
  Heidelberg, 2000.

\bibitem[Shu08]{FramedBicats}
Michael Shulman.
\newblock Framed bicategories and monoidal fibrations.
\newblock {\em Theory Appl. Categ.}, 20(18):650--738, 2008.

\bibitem[SS17]{Grobner}
Steven~V. Sam and Andrew Snowden.
\newblock Gr\"obner methods for representations of combinatorial categories.
\newblock {\em J. Amer. Math. Soc.}, 30(1):159--203, 2017.

\bibitem[SS19]{RepGmaps}
Steven~V. Sam and Andrew Snowden.
\newblock Representations of categories of {G}-maps.
\newblock {\em Journal für die reine und angewandte Mathematik (Crelles
  Journal)}, 2019(750):197--226, 2019.

\bibitem[Str20]{alaBenabou}
Thomas Streicher.
\newblock Fibred categories \`a la {Jean B\'enabou}.
\newblock Available as
  \href{https://arxiv.org/abs/1801.02927}{arXiv:1801.02927}, 2020.

\bibitem[Vis05]{Vistoli}
Angelo Vistoli.
\newblock Grothendieck topologies, fibered categories and descent theory.
\newblock In {\em Fundamental Algebraic Geometry}, volume 123 of {\em
  Mathematical Surveys and Monographs}, pages 1--104. American Mathematical
  Society, 2005.
\newblock Available as
  \href{https://arxiv.org/abs/math/0412512v4}{arXiv:0412512}.

\end{thebibliography}

\end{document}